\documentclass[11pt,dvips,twoside,letterpaper]{article}
\usepackage{pslatex}
\usepackage{fancyhdr}
\usepackage{graphicx}
\usepackage{geometry}
\RequirePackage[latin1]{inputenc} \RequirePackage[T1]{fontenc}

\def\figurename{Figure} 
\makeatletter
\renewcommand{\fnum@figure}[1]{\figurename~\thefigure.}
\makeatother

\def\tablename{Table} 
\makeatletter
\renewcommand{\fnum@table}[1]{\tablename~\thetable.}
\makeatother
\usepackage{color}
\ProvidesPackage{makeidx}
                [2000/03/29 v1.0m Standard LaTeX package]

\usepackage{amsmath}
\usepackage{amssymb}
\usepackage{amsfonts}
\usepackage{amsthm,amscd}
\newtheorem{theorem}{Theorem}[section]

\newtheorem{proposition}[theorem]{Proposition}
\theoremstyle{definition}

\newtheorem{definition}[theorem]{Definition}

\theoremstyle{remark}
\newtheorem{remark}[theorem]{Remark}

\numberwithin{equation}{section}

\def\P{\mathbb P}
\def\R{\mathbb R}
\def\E{\mathbb E}

\def\E{\mathbb E}

\def\cal{\mathcal}

\setlength{\topmargin}{-0.35in}
\setlength{\textheight}{8.5in}   
\setlength{\textwidth}{5.5in}    
\setlength{\oddsidemargin}{0.5in}
\setlength{\evensidemargin}{0.5in} \setlength{\headheight}{26pt}
\setlength{\headsep}{8pt}

\begin{document}

\author{Auguste Aman\thanks{augusteaman5@yahoo.fr,\ Corresponding author.}\;; Abouo Elouaflin\thanks{elabouo@yahoo.fr}\, and Modeste N'zi\thanks{modeste.nzi@univ-cocody.ci} \\
UFR de Mathématiques et Informatique\\
Université de Cocody, Côte d'Ivoire\\
22 BP 582 Abidjan 22}
\date{}
\title{Reflected generalized BSDEs with random time and applications}
\maketitle
\begin{abstract}
In this paper, we aim to study solutions of reflected generalized BSDEs, involving the integral with respect to a continuous process, which is the local
time of the diffusion on the boundary. We consider both a finite random terminal and a infinite horizon. In both case, we establish an existence and uniqueness result. Next, as an application, we get an American pricing option in infinite horizon and we give a probabilistic formula for the viscosity solution of an obstacle problem for elliptic PDEs with a nonlinear Neumann boundary condition.

\end{abstract}

{\bf Keywords}: American option pricing, elliptic PDEs, generalized backward stochastic differential
equations, Neumann boundary condition, viscosity solution.

{\bf MSC}: 60H20, 60H30, 60H99
\section{Introduction}
Generalized backward stochastic differential equations ( for short
GBSDEs ) has been considered by Pardoux and Zhang \cite{PZ} as an
extension of nonlinear BSDE which involves an integral with
respect to an increasing process. They provide probabilistic
representation of viscosity solutions of both parabolic and
elliptic PDE with Neumann boundary condition. Let us mention that
the now well- known theory of nonlinear backward stochastic
differential equations was formulated by Pardoux and Peng
\cite{PP}. Since, they have found several fields of applications.
Namely, we refer to Pardoux \cite{P1} and \cite{P2}, El Karoui et
al \cite{EPQ}, Cvitanic and Ma \cite {CM} for the applications in
mathematical finance and to Hamad\`{e}ne, Lepeltier \cite{HL} for
the applications in stochastic control and stochastic games. On
other hand, El Karoui et al \cite{EKPP} have considered reflected
BSDEs where the ``reflection'' keeps the solution above a given
stochastic process called an obstacle. In this setting, many
others results have been established in the literature, among
others, we note the work of Hamad\`{e}ne et al \cite{HL1,HLA}, Cvitanic and Ma \cite{CM1}%
, Hamad\`{e}ne and Ouknine \cite{HO}. Recently, Ren and Xia \cite{Ral} give a probabilistic formula
for the viscosity solution of an obstacle problem for parabolic PDEs with a
nonlinear Neumann boundary condition. They use the connection with such PDEs and the reflected GBSDEs.
We notice that above result is with deterministic horizon and Lipschitz condition on the coefficients.

To fill the gap, this paper is devoted to derive existence and uniqueness result to reflected GBSDEs with random terminal time which may be infinite and non Lipchitz coefficients. In application, we give an optimal stopping time problem related to American pricing option, using a infinite horizon reflected GBSDEs. With a finite random time one, we derive a probabilistic formula for the viscosity solution of an obstacle problem for elliptic PDEs with a nonlinear Neumann boundary condition. The rest of this paper is organized as follows. We precise our problem in section 2. Section 3 and Section 4 are devoted to the main results. In section 5, we give as an application, the connection with American option pricing and an obstacle problem for a elliptic PDEs with nonlinear Neumann boundary condition.
\section{Formulation of the problem}
Let $(\Omega , {\cal F},{\Bbb P})$ be a complete probability space
and $\left( W_{t},{\cal F}_{t}\right) _{t\geq 0}$ be a
$d$-dimensional Wiener process defined on it. $\left\{ {\cal
F}_{t}\right\} $ denotes is natural filtration augmented with all
${\Bbb P}$-null sets of ${\cal F}$ and $\mathcal{F}_{\infty}=\bigcup_{t\geq 0}\mathcal{F}_t$. Let us consider the
following objects:
\begin{description}
\item  $\left( \text{\bf A1}\right)\left\{\begin{array}{lll} (i)\
\tau$ {\mbox is a} $ \mathcal{F}_{t}$-$ \mbox{stopping time}.\
\\
\cr (ii)\ \left(G_{t}\right) _{t\geq 0} \mbox{is a continuous real
valued increasing} \mathcal{F}_{t}$-$\mbox{progressively
measurable}\\ \mbox{process verifying}\ G_{0}=0
\end{array}\right.
$
\item  $\left( \text{\bf A2}\right) $ $f$ and $g$ are $\mbox{I\hspace{-.15em}R}$-%
values measurable functions defined respectively on $\Omega \times %
\R_{+}\times \mbox{I\hspace{-.15em}R}\times %
\mbox{I\hspace{-.15em}R}^{d}$ and $\Omega \times \mbox{I\hspace{-.15em}R}%
_{+}\times \mbox{I\hspace{-.15em}R}$ such that there are constants $\alpha \in %
\R,\ \beta <0,\ K
>0, \ \lambda>2|\alpha|+K^{2}$ and
 $\mu>2|\beta|$ and $[1,+\infty )$-valued process
$\{\varphi _{t},\ \psi _{t}\}_{t\leq 0}$ verifying\\
$
\begin{array}{lll}
\\
(i)\ \forall t,\forall z,y\longmapsto (f(t,y,z),g(t,y))\text{ is continuous}%
\cr\cr(ii)\ \left( \omega ,t\right) \longmapsto (f(\omega
,t,y,z),g(\omega
,t,y))\text{ is }\mathcal{F}_{t}$-$\text{progressively measurable} \\
\cr(iii)\ \forall t,\forall y,\forall \left( z,z^{\prime }\right)
,\ \
|f(t,y,z)-f(t,y,z^{\prime })|\leq K |z-z^{\prime }| \\
\cr(iv)\ \forall t,\forall z,\forall (y,y^{\prime }),\ \left(
y-y^{\prime }\right) \left( f(t,y,z)-f(t,y^{\prime },z)\right)
\leq \alpha |y-y^{\prime
}|^{2} \\
\cr(v)\ \forall t,\,\forall (y,y^{\prime }),\ \left(
y-y^{\prime }\right) \left( g(t,y)-g(t,y^{\prime })\right) \leq
\beta |y-y^{\prime }|^{2}
\\
\cr(vi)\ \forall t,\forall y,\forall z,\ |f(t,y,z)|\leq \varphi
_{t}+K(|y|+|z|),\ \ |g(t,y)|\leq \psi _{t}+K|y| \\
\cr(vii)\text{ }\E \left[ \int_{0}^{\tau}e^{\lambda s+\mu
G(s)}[\varphi (s)^{2}ds + \psi (s)^{2}]dG_{s}\right]<\infty .
\end{array}
$ \item  $\left( \text{\bf A3}\right)\xi $ is a
$\mathcal{F}_{\tau}$-measurable variable such that $\E(e^{\lambda
\tau +\mu G(\tau)}|\xi |^{2})<+\infty $

\item  $\left( \text{\bf A4}\right) $ $\left( S_{t}\right) _{t\geq
0}$ is a continuous progressively measurable real-valued process
satisfying:\newline $
\begin{array}{lll}
\left( i\right) &  & \E\left( \sup_{0\leq t\leq
\tau}e^{\lambda t+\mu G_t}(S_{t}^{+})^{2}\right) <+\infty \\
&  &  \\
\left( ii\right) &  & S_{\tau}\leq \xi \text{ }\mathbb{P}\text{ a.s.}
\end{array}
$
\end{description}

Let $\left(\tau ,\xi ,f,g,S\right)$ be the data satisfying the previous conditions. We want to construct an adapted processes $(Y_t , Z_t,K_t)_{t\geq 0}$ solution
of the reflected GBSDE
\begin{eqnarray}
-dY_t={\bf 1}_{t\leq \tau}f(t,Y_t,Z_t)dt+{\bf 1}_{t\leq \tau}g(t,Y_t)dG_t+dK_t-Z_tdW_t,\;\;\; Y_{\tau}=\xi\label{BSDE}
\end{eqnarray}
or equivalently
\begin{eqnarray}
Y_{t\wedge\tau}=\xi +\int_{t\wedge \tau}^{\tau}f(t,Y_t,Z_t)dt+\int_{t\wedge \tau}^{\tau}g(t,Y_t)dG_t-\int_{t\wedge \tau}^{\tau}Z_tdW_t+K_{\tau}-K_{t\wedge\tau}.
\end{eqnarray}
Let us first recall that a solution to the equation $(\ref{BSDE})$ is a triplet of progressively measurable
processes $\left(Y_{t},Z_{t},K_{t}\right)_{t\geq 0}$ with values in $\R\times \R^{d}\times \R$ such that
\begin{enumerate}
\item  $Y$ is a continuous process, $\P$-a.s., for each $T,\, t\mapsto Z_t$ belongs to $L^{2}((0,T);\R^{d})$ and\\
$t\mapsto (f(t,Y_t,Z_t), g(t,Y_t))\in L^{1}((0,T);\R)\times L^{1}((0,T);\R)$;
\item For all $t\geq \tau\;\; a.s., \, Y_{t}=\xi,\;\; Z_{t}=0,\;\; K_{t}=K_{\tau}$;
\item for each nonnegative real $T$,\, $\forall t\in [0,T]$,
\begin{eqnarray*}
Y_{t}=Y_{T\wedge \tau}+\int_{t\wedge\tau}^{T\wedge\tau}f(s,Y_{s},Z_{s})ds+\int_{t\wedge\tau}^{T\wedge\tau}g(s,Y_{s})dG_{s}
-\int_{t\wedge\tau}^{T\wedge\tau}Z_{s}dW_{s}+K_{T\wedge\tau}-K_{t\wedge\tau}.
\end{eqnarray*}
\item$\ Y_{t}\geq S_{t},\, t\geq 0$
\item $\E\left( \sup_{0\leq t\leq \tau
}e^{\lambda t+\mu G(t)}\left| Y_{t}\right| ^{2}+\int_{0}^{\tau
}e^{\lambda s+\mu G(s)}\left[ \left( \left| Y_{s}\right|
^{2}+\left| Z_{s}\right| ^{2}\right)
ds+\left| Y_{s}\right| ^{2}dG_{s}\right] \right) <+\infty$
\item $K$ is a non-decreasing process such that $K_{0}=0$ and $\int_{0}^{\tau}\left( Y_{t}-S_{t}\right) dK_{t}=0$\, a.s.
\end{enumerate}

\section{ Reflected GBSDEs with finite random terminal time}
The aim of this section is to prove the first main result of this paper, concerning the existence and uniqueness result
for reflected GBSDEs $(\ref{BSDE})$ when the random time $\tau$ is suppose to be finite.
\begin{theorem}
Assume that $({\bf A1})$-$({\bf A4})$ hold.
Moreover if the obstacle process  $(S_{t})_{t\geq 0}$ is the Itô process in
the form $\displaystyle{dS_{t}=m_{t}{\bf
1}_{\left[0,\tau\right]}dt+v_{t}{\bf
1}_{\left[0,\tau\right]}dW_{t}}$,\\ with
$\displaystyle{\E\left(\int_{0}^{\tau }e^{\lambda s+\mu
G(s)}\left(|m_{s}|^{2}+|v_{s}|^{2}\right)ds\right)
<+\infty}$ . Then there exists a unique triple $\left(
Y,Z,K\right) $ solution of reflected GBSDE $(\ref{BSDE})$.
\end{theorem}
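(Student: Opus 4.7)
The plan is to obtain existence by penalization and uniqueness by a weighted Itô estimate, both of which mirror the deterministic-horizon scheme of El Karoui et al., but with the weight $e^{\lambda t+\mu G_t}$ tailored to the monotonicity constants $\alpha,\beta$ and to the random terminal time $\tau$.

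For \textbf{uniqueness}, suppose $(Y,Z,K)$ and $(Y',Z',K')$ are two solutions and set $\Delta Y=Y-Y'$, $\Delta Z=Z-Z'$, $\Delta K=K-K'$. I would apply Itô's formula to $e^{\lambda t+\mu G_t}|\Delta Y_{t\wedge\tau}|^2$ between $t\wedge\tau$ and $\tau$. The weight produces a favorable term $-(\lambda\,dt+\mu\,dG_t)|\Delta Y|^2$; condition $(iv)$ bounds $2\Delta Y(f(Y,Z)-f(Y',Z'))dt$ by $(2\alpha+K^2/\varepsilon)|\Delta Y|^2 dt+\varepsilon|\Delta Z|^2 dt$; condition $(v)$ bounds $2\Delta Y(g(Y)-g(Y'))dG_t$ by $2\beta|\Delta Y|^2 dG_t$; and the Skorohod condition, together with $Y,Y'\ge S$, forces $\Delta Y\,d(K-K')\le0$. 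Choosing $\varepsilon$ small so that $\lambda>2\alpha+K^2/\varepsilon+\varepsilon^{-1}K^2\cdot 0$ (the standard tuning allowed by $\lambda>2|\alpha|+K^2$) and using $\mu>2|\beta|$, taking expectations and Gronwall give $\Delta Y\equiv0$, hence $\Delta Z\equiv0$ and $\Delta K\equiv0$.

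For \textbf{existence}, I would use penalization: consider the non-reflected generalized BSDE with driver
\[
 f_n(t,y,z)=f(t,y,z)+n(y-S_t)^-,
\]
keeping $g$ unchanged, and the same data $(\tau,\xi)$. Because $f_n$ still satisfies $({\bf A2})(iii)$-$(iv)$ (with the same Lipschitz constant in $z$ and monotonicity constant $\alpha$ unchanged once one dominates the penalization term by the positive part), the existence result of Pardoux--Zhang for GBSDEs with random terminal time yields a unique solution $(Y^n,Z^n)$, together with the increasing process $K^n_t=n\int_0^{t\wedge\tau}(Y^n_s-S_s)^-ds$. Standard comparison gives $Y^n\le Y^{n+1}$. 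The crucial a priori bounds in the weighted norm $e^{\lambda t+\mu G_t}$ are produced by applying Itô to $e^{\lambda t+\mu G_t}|Y^n_t-S_t|^2$ and using the Itô decomposition $dS_t=m_t\,dt+v_t\,dW_t$ on $[0,\tau]$: the driving terms $m_t,v_t$ absorb into the $\E\int e^{\lambda s+\mu G_s}(|m_s|^2+|v_s|^2)ds<\infty$ integrability, and one deduces uniform-in-$n$ control of $\E[\sup|Y^n|^2]$, $\E\int|Z^n|^2$, and of $\E|K^n_\tau|^2$. The latter comes precisely from writing $n(Y^n-S)^-=K^n$ on $[0,\tau]$ and comparing $Y^n-S$ against the solution of an auxiliary BSDE built from $f_n$ and the Itô coefficients of $S$.

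Once those estimates are in place, $Y^n\uparrow Y$, $Z^n\to Z$ weakly (then strongly, via Itô on $|Y^n-Y^m|^2$), and $K^n\to K$ weakly, yielding a triple $(Y,Z,K)$ that solves the equation. The Skorohod condition $\int_0^\tau(Y_t-S_t)dK_t=0$ follows from the classical argument: $Y^n\ge S-\frac1n\|(Y^n-S)^-\|$ combined with the weak convergence of $dK^n$ and the uniform convergence of $Y^n$ (via Dini, since $Y^n\uparrow Y$ and the limit is continuous) implies that the support of $dK$ is contained in $\{Y=S\}$. The \textbf{main obstacle} is establishing the uniform $L^2$ bound on $K^n_\tau$ in the present randomterminal, weighted setting: the It\^o form of $S$ is used exactly to turn the penalization term into a controllable stochastic integral, and the weight $e^{\lambda t+\mu G_t}$ has to be threaded consistently through the comparison with the auxiliary BSDE so that the constants $\lambda>2|\alpha|+K^2$ and $\mu>2|\beta|$ leave a strictly negative margin after all absorptions. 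Once this estimate is secured the passage to the limit and the verification of properties 1--6 of the solution are routine.
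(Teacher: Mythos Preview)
Your uniqueness argument is essentially the paper's: It\^o's formula applied to $e^{\lambda(t\wedge\tau)+\mu G_{t\wedge\tau}}|\Delta Y_{t\wedge\tau}|^{2}$, the monotonicity hypotheses $(iv)$ and $(v)$, the sign $\Delta Y\,d\Delta K\le 0$, and the choice of $\lambda,\mu$ with the right margins.

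For existence, however, you take a genuinely different route. You propose \emph{penalization}: solve the unreflected random-terminal GBSDE with driver $f_n=f+n(y-S_t)^-$ via Pardoux--Zhang, then pass to the limit using monotonicity $Y^n\uparrow Y$ and the uniform weighted bounds that come from applying It\^o to $e^{\lambda t+\mu G_t}|Y^n_t-S_t|^2$ and exploiting the It\^o decomposition $dS_t=m_t\,dt+v_t\,dW_t$. The paper instead approximates by \emph{finite deterministic horizon}: for each $n$ it solves the reflected GBSDE on $[0,n]$ with terminal value $\xi_n=\E(\xi\mid\mathcal F_n)$, citing the deterministic-horizon result of Ren--Xia for each approximant, and then shows the sequence $(Y^n,Z^n,K^n)$ is Cauchy in the weighted norms directly, without any weak-limit or Dini argument.

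Your route makes the role of the It\^o hypothesis on $S$ explicit---it is precisely what lets you control $\E|K^n_\tau|^2$ uniformly. The paper's route bypasses this by pushing the reflection difficulty entirely into the cited finite-horizon result and handling the random terminal time through a Cauchy estimate driven only by $\E\big(e^{\lambda(m\wedge\tau)+\mu G_{m\wedge\tau}}|\xi_{m\wedge\tau}-\xi_{n\wedge\tau}|^2\big)\to0$; in the displayed argument the It\^o structure of $S$ never appears and the bound on $K^n_\tau$ is obtained by absorbing $\delta|K^n_\tau|^2$ back into the estimate with $\delta$ small. So your approach is more self-contained but requires the auxiliary-BSDE comparison you flag as the main obstacle; the paper's approach is shorter once one is willing to import the deterministic-horizon machinery wholesale.
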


\begin{proof}
We adopt this strategy for the proof.

{\bf  Existence.} For each integer $n$, let us denote $\xi_n=\E(\xi|\mathcal{F}_{n})$ and consider the data
\newline
$(\xi_n,{\bf 1}_{[0,\tau]}f,{\bf 1}_{[0,\tau]}g,S_{.\wedge\tau})$. Under $(\bf A1)$-$(\bf A4)$, one can show, using the same argument as in \cite{Ral} that there exists a unique process $(\overline{Y}^n,\overline{Z}^n,\overline{K}^n)$, solution of the classical (deterministic terminal time) reflected GBSDE
\begin{eqnarray}
\overline{Y}_{t}^{n}&=&\xi_n+\int_{t}^{n}{\bf 1}_{[0,\tau]}f(s,\overline{Y}_{s}^{n},\overline{Z}_{s}^{n}) ds
+\int_{t}^{n}{\bf 1}_{[0,\tau]}g(s,\overline{Y}_{s}^{n})dG_{s}\nonumber\\
&&-\int_{t}^{n}\overline{Z}_{s}^{n}dW_{s}+\overline{K}^{n}_{n}-\overline{K}^{n}_{t},\, 0\leq t\leq n,\label{BSDEf}
\end{eqnarray}
satisfying:
\begin{description}
\item$
\begin{array}{l}
\overline{Y}^n_t\geq S_t \,\, \mbox{and}\,\, \int^{n\wedge\tau}_0(\overline{Y}^{n}_t-S_t)d\overline{K}^{n}_t=0.
\end{array}
$
\end{description}

Since $\xi$ belongs to $L^{2}(\mathcal{F}_\tau)$, there exists a process $(\eta_t)_{t\geq 0}$ in $M^{2}(0, \tau; \R^d )$
such that
\begin{eqnarray*}
\xi=\E[\xi]+\int_0^{\tau}\eta_sdW_s
\end{eqnarray*}
and, we define $(\overline{Y}^n, \overline{Z}^n, \overline{K}^n)$ on the whole time axis by setting:
\begin{eqnarray*}
\forall\, t>n,\, \overline{Y}^n_t=\E(\xi|\mathcal{F}_{t})=\xi_t\;\;\;\;\; \overline{Z}^n_t=\eta_t{\bf 1}_{[0,\tau]}\,\,\, \mbox{and}\;\;\;\;\overline{K}_t^n=\overline{K}_{n}^n.
\end{eqnarray*}
In the sequel, we consider the process $(Y^n,Z^n,K^n)$ defined by: $Y^n_t = \overline{Y}^n_{t\wedge \tau},\; Z^{n}_t= Z^{n}_{t\wedge\tau}$ and $K^n_t = \overline{K}^n_{t\wedge \tau}$.

The rest of the proof will be split in several steps and, $C$ denotes a positive constant which may vary from one
line to another.

{\bf Step 1}{\it : A priori estimates uniform in }$n.$\newline
First, there exists a constant $C>0$ such that for all $s\ge 0$,
\begin{eqnarray}
&&\E\left( \sup_{0\leq t\leq \tau }e^{\lambda t+\mu G_t}\left|
Y_{t}^{n}\right| ^{2}+\int_{0}^{\tau }e^{\lambda s+\mu G_s}\left[( \left|
Y_{s}^{n}\right| ^{2}+\left| Z_{s}^{n}\right|^{2})ds+
\left|Y_{s}^{n}\right| ^{2}dG_{s}\right]+ |K_\tau^n|^{2}\right) \nonumber\\\label{est}\\
&\leq &C\E\left( e^{\lambda \tau +\mu G_{\tau}}\left| \xi \right|
^{2}+\int_{0}^{\tau }e^{\lambda s+\mu G_s}\left[ \varphi^{2}(s)
ds+\psi^{2}(s) dG_{s}\right]+\sup_{0\leq t\leq \tau }e^{\lambda t+\mu G_t}\left| \left(
S_{t}^{{}}\right) ^{+}\right| ^{2}\right).\nonumber
\end{eqnarray}
Indeed, for any arbitrarily small $\varepsilon > 0$ and any $\rho < 1$
arbitrarily close to one, there exists a constant $C>0$ such that for all $s > 0,\, y\in \R,\, z\in\R^d$,
\begin{eqnarray*}
2\langle y,f(s,y,z)\rangle &\leq& (2\alpha +\rho^{-1}K^{2}+\varepsilon )|y|^{2}+\rho|z|^{2}+c\varphi^{2}(s), \\
2\langle y,g(s,y)\rangle &\leq &(2\beta +\varepsilon)|y|^{2}+c\psi^{2}(s).
\end{eqnarray*}
From these and Itô's formula, we deduce that for any arbitrarily small $\delta>0$
\begin{eqnarray}
&&\E\left(e^{\lambda t+\mu G_{t}}| Y_{t}^{n}| ^{2}+\int_{t\wedge\tau}^{\tau }e^{\lambda s+\mu
G_s}[(\bar{\lambda}| Y_{s}^{n}| ^{2}+\bar{\rho}| Z_{s}^{n}| ^{2})ds+\bar{\mu }| Y_{s}^{n}| ^{2}dG_{s}]\right)  \nonumber\\
&\leq &\E\left(e^{\lambda \tau +\mu G_{\tau }}|\xi|^{2}+2c\int_{t\wedge\tau}^{\tau}e^{\lambda
s+\mu G_s}\left[\varphi^{2}(s)ds+\psi^{2}(s)dG_{s}\right]+2\int_{t\wedge\tau}^{\tau}e^{\lambda s+\mu G_s}\langle S_s,dK_{s}^{n}\rangle\right)\nonumber\\
&\leq&\E\left(e^{\lambda \tau +\mu G_{\tau }}|\xi|^{2}+2c\int_{t\wedge\tau}^{\tau}e^{\lambda
s+\mu G_s}\left[\varphi^{2}(s)ds+\psi^{2}(s)dG_{s}\right] \right.\nonumber \\
&&\left.+\delta^{-1}\sup_{0\leq t\leq \tau}e^{\lambda s+\mu G_s}(S_s^{+})^{2}+\delta(K_{\tau}^{n}-K^{n}_{t})^{2}\right)
,  \label{D1}
\end{eqnarray}
where $\bar{\lambda }=\lambda -2\alpha -\rho^{-1}K^{2}-\varepsilon,\; \bar{\rho}=1-\rho$ and $%
\bar{\mu }=\mu -2\beta -\varepsilon$. We may choose $\varepsilon$ and $\rho$ such that $\bar{\lambda }> 0, \,
\bar{\rho}> 0$ and $\bar{\mu}> 0$. From the reflected GBSDE $(\ref{BSDEf})$, estimate $(\ref{D1})$ and for every $\lambda ^{\prime }$ such that $0<\lambda ^{\prime}<\min \left( \lambda ,\mu \right)$, we have
\begin{eqnarray*}
&&\delta {\Bbb E}\left| K_{\tau }^{n}-K_{t}^{n}\right|
^{2} \\
&\leq &\delta {\Bbb E}\left( \left| Y_{t}^{n}\right|
^{2}+\left| \xi\right| ^{2}+(\lambda ^{\prime })^{-1}
\int_{t\wedge\tau}^{\tau }e^{\lambda ^{\prime }s}\left( \varphi
^{2}(s)+\left| Y_{s}^{n}\right| ^{2}+\left| Z_{s}^{n}\right| ^{2}\right)
ds\right. \\
&&\left. +(\lambda ^{\prime })^{-1}\int_{t\wedge\tau}^{\tau}e^{\lambda ^{\prime }G_s}\left( \psi ^{2}(s)+\left| Y_{s}^{n}\right|
^{2}\right) dG_{s}\right)\\
&\leq &\delta {\Bbb E}\left( e^{\lambda t+\mu G_t}\left| Y_{t}^{n}\right| ^{2}+e^{\lambda\tau+\mu
G_{\tau}}\left|\xi\right|^{2}\right) \\
&&+\delta (\lambda ^{\prime} )^{-1}{\Bbb E}\left( \int_{t\wedge\tau}^{\tau }e^{\lambda
s+\mu G(s)}\left[ \left| Y_{s}^{n}\right| ^{2}+ \varphi ^{2}(s)+\left| Z_{s}^{n}\right| ^{2}\right]ds\right) \\
&&+\delta(\lambda ^{\prime} )^{-1} {\Bbb E}\left( \int_{t\wedge\tau}^{\tau }e^{\lambda
s+\mu G(s)}(|Y^{n}_s|^{2}+\psi ^{2}(s))dG_{s}\right).
\end{eqnarray*}
Chosen $\delta $ small enough such that $1-\delta(\lambda ^{\prime} )^{-1} >0,\; \bar{\bar{
\lambda }}=\bar{\lambda }-\delta(\lambda ^{\prime} )^{-1} >0,\;\bar{\bar{\rho}}=\bar{\rho}-\delta(\lambda ^{\prime} )^{-1} >0$ and $
\bar{\bar{\mu }}=\bar{\mu }-\delta(\lambda ^{\prime} )^{-1} >0,$ we get
\begin{eqnarray*}
&&\E\left[(1-\delta(\lambda ^{\prime} )^{-1})e^{\lambda t+\mu G_t}| Y_{t}^{n}|^{2}+
\int_{t\wedge\tau}^{\tau }e^{\lambda s+\mu G_s}\left([\bar{\bar{\lambda }}|Y_{s}^{n}|^{2}+\bar{\bar{\rho}}| Z_{s}^{n}| ^{2}]ds+\bar{\bar{\mu }}|Y_{s}^{n}|^{2}dG_{s}\right)\right]\\ &\leq &C\E\left( e^{\lambda \tau+\mu G_{\tau}}|\xi|^{2}+\int_{t\wedge\tau}^{\tau}e^{\lambda s+\mu G_s}[\varphi^{2}(s)ds+\psi^{2}(s)dG_{s}]+\sup_{0\leq t\leq \tau }e^{\lambda t+\mu G(t)}( S_{t}^{+})^{2}\right) .
\end{eqnarray*}
Therefore, the result follows by using Burkhölder-Davis-Gundy inequality.

{\bf Step 2}{\it : Convergence of the sequence }$\left(
Y^{n},Z^{n},K^{n}\right) .$\newline
For $m > n$, let us set $\Delta Y_t=Y^{m}_t-Y^n_t,\; \Delta Z_t=Z^m_t- Z^n_t,\;  \Delta K_t=K^m_t- K^n_t$. In view of \eqref{BSDEf}, we get
\begin{eqnarray*}
-d(\Delta Y)_{t}&=&(f(s,Y^n_s,Z^n_s)-f(s,Y^m_s,Z^m_s))ds+(g(s,Y^n_s)-g(s,Y^m_s))dG_s\\
&&-\Delta Z_tdW_t+d(\Delta K)_{s},
\end{eqnarray*}
from which, Itô's formula and above assumptions yield
\begin{eqnarray}
&&e^{\lambda t+\mu G_{t}}|\Delta Y_{t}|^{2}+\int_{t\wedge\tau}^{m\wedge\tau}e^{\lambda s+\mu G_s}[(\bar{\lambda}|\Delta Y_{s}|^{2}+\bar{\rho}|\Delta Z_s|)ds+\bar{\mu}|\Delta Y_s|^{2}dG_{s}]\nonumber\\
&\leq &e^{\lambda m+\mu G_{m}}|\Delta Y_{m}|^{2}+\int_{t\wedge\tau}^{m\wedge\tau}\langle\Delta Y_s,d(\Delta K_s)\rangle-2\int_{t\wedge\tau}^{m\wedge\tau}e^{\lambda s+\mu G_s}\langle
\Delta Y_{s},\Delta Z_{s}dW_{s}\rangle.\label{estconvergence}
\end{eqnarray}
Furthermore, since one can show that
\begin{eqnarray*}
\int_{t\wedge\tau}^{m\wedge\tau}\langle\Delta Y_s,d(\Delta K_s)\rangle\leq 0,
\end{eqnarray*}
by taking expectation in both side of \eqref{estconvergence} and using Burkhölder-Davis-Gundy inequality, we get
\begin{eqnarray*}
&&\E\left(\sup_{0\leq t\leq \tau}e^{\lambda t+\mu G_t}\left|\Delta Y_t\right|^{2}+\int_{0}^{\tau}e^{\lambda s+\mu G_s}[(\bar{\lambda}|\Delta Y_{s}|^{2}+\bar{\rho}|\Delta Z_s|)ds+\bar{\mu}|\Delta Y_s|^{2}dG_{s}]\right)\nonumber\\
&\leq &\E\left(e^{\lambda(m\wedge\tau)+\mu G_{m\wedge\tau}}|\Delta Y_{m}|^{2}\right).
\end{eqnarray*}
But, since $\Delta Y_m=\xi_{m\wedge\tau}-\xi_{n\wedge\tau}$,
\begin{eqnarray*}
\E\left(\sup_{0\leq t\leq \tau}e^{\lambda t+\mu G_t}\left|\Delta Y_t\right|^{2}+\int_{0}^{\tau}e^{\lambda s+\mu G_s}[(\bar{\lambda}|\Delta Y_{s}|^{2}+\bar{\rho}|\Delta Z_s|)ds+\bar{\mu}|\Delta Y_s|^{2}dG_{s}]\right)
\end{eqnarray*}
tends to zero as $n,m$ goes to infinity. Therefore, $(Y^n,Z^n)$ is a Cauchy sequence and converges to $(Y,Z)$.
In virtue of $(\ref{BSDEf})$, the convergence of $Y^n,\ Z^n$ (for a subsequence), the continuity of $f$ and $g$ and
\begin{itemize}
\item $\sup_{n\geq0}|f(s,Y^n_s,Z_s)|\leq f_s+K\left\{(\sup_{n\geq0}|Y_s^n|)+\|Z_s\|\right\}$,
\item $\sup_{n\geq0}|\phi(s,Y^n_s)|\leq \phi_s+K\left\{(\sup_{n\geq0}|Y^n_s|)\right\}$,
\item $\mathbb{E}\int_0^T|f(s,Y^n_s,Z^n_s)-f(s,Y^n_s,Z_s)|^2ds\leq C\mathbb{E}\int_0^T\|Z^n_s-Z_s\|^2ds$,
\end{itemize}
there exists a  process $K$ such that for all
$t\in[0,T]$
$$\E\left| K_{t}^{n}-K_{t}^{{}}\right| ^{2}\longrightarrow 0$$
as $n$ goes to infinity.

{\bf Step 4} {\it The limit process }$\left( Y,Z,K\right) $ {\it solves our
reflected GBSDE }$\left( \tau ,\xi ,f,g,S\right).$

Taking the limit in BSDE $(\ref{BSDEf})$, we get $\mathbb{P}$-a.s. for any $T>0$,
$$Y_t=\xi+\int_t^{\tau\wedge T} f(s,Y_s,Z_s)ds+\int_t^{\tau\wedge T} g(s,Y_s)dG_s+K_{\tau\wedge T}-K_t-\int_t^{\tau\wedge T} Z_sdW_s,\, \forall t\in[0,T\wedge\tau]$$
and for all $t\geq\tau$, $Y_t=\xi,\; Z_t=0,\; K_t=K_\tau$. Moreover, since $(Y^{n}_t,K^{n}_t)_{0\leq t\leq T}$ tends to $(Y_t,K_t)_{0\leq t\leq T}$ in probability, the measure $dK^n$ converges to $dK$ in probability, so that $\int_{0}^{n\wedge\tau}(Y_{s}^{n}-S_{s})dK_{s}^{n}\rightarrow \int_{0}^{\tau}(Y_{s}-S_{s})dK_{s}$ in probability as $n\rightarrow \infty$. Hence, $
\int_{0}^{\tau }\left( Y_{s}-S_{s}\right) dK_{s}= 0$.

{\bf  Uniqueness}\newline
Let $(Y_{t},Z_{t},K_{t})$ and $(Y'_{t},Z'_{t},K'_{t})$ be two solutions of the
reflected GBSDE $(\ref{BSDE})$, and $(\bar{Y}_t,\bar{Z}_t,\bar{K}_t)=(Y_t-Y'_t,Z_t-Z'_t,K_t-K'_t)$. It follows from Itô's formula,
the assumptions $(iii)$, $(iv)$ and $(v)$ of $({\bf A2})$ that
\begin{eqnarray*}
&&e^{\lambda (t\wedge \tau )+\mu G_{t\wedge \tau }}| \bar{Y}_{t\wedge
\tau }|^{2}+\int_{t\wedge \tau }^{T\wedge \tau }e^{\lambda s+\mu
G_{s}}[\lambda |\bar{Y}_{s}|^{2}ds+\mu|\bar{Y}_{s}|^{2}dG_{s}+|\bar{Z}_{s}|^{2}ds] \\
&\leq&e^{\lambda (T\wedge \tau )+\mu G_{T\wedge \tau}}\left|\bar{Y}_{T\wedge
\tau }\right| ^{2}+2\int_{t\wedge \tau }^{T\wedge \tau }e^{\lambda s+\mu
G(s)}[\alpha|\bar{Y}_{s}|^{2}+K|\bar{Y}_s|\times|\bar{Z}_{s}|^{2}]ds \\
&&2\beta\int_{t\wedge \tau }^{T\wedge \tau }e^{\lambda s+\mu G(s)}|\bar{Y}_{s}|^{2}dG_{s}
-2\int_{t\wedge \tau }^{T\wedge \tau }e^{\lambda s+\mu G(s)}\langle
\bar{Y}_{s},\bar{Z}_{s}dW_{s}\rangle.
\end{eqnarray*}
Hence, with $\rho < 1, \bar{\lambda}=\lambda-2\alpha -\rho^{-1}K^{2}>0, \bar{\mu}=\mu-2\beta>0$,
\begin{eqnarray*}
&&\E\left(e^{\lambda (t\wedge \tau )+\mu G_{t\wedge \tau }}| \bar{Y}_{t\wedge
\tau }|^{2}+\int_{t\wedge \tau }^{T\wedge \tau }e^{\lambda s+\mu
G_{s}}[\lambda |\bar{Y}_{s}|^{2}ds+\mu|\bar{Y}_{s}|^{2}dG_{s}+(1-\rho)|\bar{Z}_{s}|^{2}ds]\right)\\
&\leq&\E\left(e^{\lambda (T\wedge \tau )+\mu G_{T\wedge \tau}}|\bar{Y}_{T\wedge\tau}|^{2}\right),
\end{eqnarray*}
and consequently, letting $T\rightarrow \infty $, dominated convergence theorem yields
\begin{eqnarray*}
\E\left( e^{\lambda (t\wedge \tau )+\mu G(t\wedge \tau )}\left|\bar{Y}_{t\wedge \tau }\right| ^{2}\right) =0.
\end{eqnarray*}
Then for all $t$, $ \bar{Y}_{t\wedge \tau }=0$ and $\bar{Z}_{t\wedge \tau }=0.$ Moreover, since
\begin{eqnarray*}
\bar{K}_{t\wedge\tau} &=&\bar{Y}_{0}-\bar{Y}_{t\wedge\tau}-\int_{0}^{t\wedge \tau
}f(s,Y_{s},Z_{s}^{{}})-f(s,Y_{s}^{\prime },Z_{s}^{\prime })ds \\
&&-\int_{0}^{t\wedge \tau }g(s,Y_{s})-g(s,Y_{s}^{\prime
})dG_{s}+\int_{0}^{t\wedge \tau }\bar{Z}_{s}dW_{s},
\end{eqnarray*}
$\bar{K}_{t\wedge\tau}=0$ for all $t$.
\end{proof}

\section{Infinite horizon reflected GBSDEs}
 In this section, we study the following infinite horizon reflected GBSDE:
\begin{eqnarray}
Y_t=\xi+\int_{t}^{\infty}f(s,Y_s,Z_s)ds+\int_{t}^{\infty}g(s,Y_s)ds-\int_{t}^{\infty}Z_sdW_s+K_{\infty}-K_t,\,\, 0\leq t\leq \infty.\label{BSDEinfh}
\end{eqnarray}
Let us introduce some spaces which our discussion will be carried on.
\begin{eqnarray*}
\mathcal{S}^{2}=\left\{\varphi_t,\, 0 \leq t\leq\infty,\, \mbox{ is an}\, \mathcal{F}_t\mbox{-adapted process such that}, \E\left(\sup_{0\leq t\leq \infty}|\varphi_t|^{2}\right)<\infty\right\},
\end{eqnarray*}
\begin{eqnarray*}
\mathcal{H}^{2}=\left\{\varphi_t,\, 0 \leq t\leq\infty,\, \mbox{ is an}\, \mathcal{F}_t\mbox{-adapted process such that},\,
 \E\left(\int_{0}^{\infty}|\varphi_t|^{2}dt\right)<\infty\right\},
\end{eqnarray*}
Throughout the paper, we propose the following assumptions:
\begin{description}
\item $({\bf A2'})$ $f:\Omega\times[0, \infty)\times\R\times\R^d\rightarrow\R$ and $g:\Omega\times[0, \infty)\times\R\rightarrow\R$ measurable mappings and three positives deterministic processes $u,\, v$ and $v'$ verifying
\begin{eqnarray}
\int_{0}^{\infty}[(v_t+v'^{2}_t)dt+u_tdG_t]<+\infty.\label{estispecial}
\end{eqnarray}
such that\\\\
$\begin{array}{l}
(i)\;|f(t,y,z)-f(t,y',z')|\leq v_t|y-y'|+v'_t\|z-z'\|,\\\\
(ii)\; |g(t,y)-g(t,y')|\leq u_t|y-y'|\\\\
(iii)\, \langle y-y',g(t,y)-g(t,y')\rangle\leq \beta|y-y'|^{2}\\\\
(iii)\; |f(t,y,z)|\leq \varphi_t+K(|y|+\|z\|),\;\;  |g(t,y)|\leq \psi_t+K|y|\\\\
(iv)\, \E\left(\int_{0}^{\infty}\varphi_t^{2}ds+\psi_t^{2}dG_t\right)< \infty.
\end{array}$
\item $({\bf A3'})$ a terminal value $\xi\in L^{2}(\Omega,\mathcal{F}_{\infty},\P)$
\item $({\bf A4'})$ The barrier $(S_t,\; t\geq 0)$ is a continuous progressively
measurable real-valued process such that\\\\
$\begin{array}{l}
(i)\, \E[\sup_{t\geq 0}(S_{t}^{+})^2]< \infty\\\\
(ii)\, \limsup_{t\nearrow \infty} S_t \leq \xi,\;  a.s.
\end{array}$
\end{description}
With all the above preparations, we have
\begin{definition}
A solution to reflected GBSDE associated with the data $(\xi,f,g,S)$ is a triple $(Y_t, Z_t, K_t)$ of $\mathcal{F}_t$ progressively measurable
processes such that $(\ref{BSDEinfh})$ holds and
\begin{description}
\item $(i)\,\, Y\in\mathcal{S}^{2},\;\; Z\in\mathcal{H}^{2},\;\;  K_{\infty}\in L^{2}$;
\item $(ii)\;\; Y_t\geq S_t,\;\; t\geq \infty$;
\item $(iii)\,\,  K_t$ is continuous and increasing, $K_0 = 0$, and $\int_{0}^{\infty}(Y_t-S_t)dK_t = 0$.
\end{description}
\end{definition}
Our approach to solve above reflected GBSDEs with infinite horizon is to use the snell envelope theory connected to the contraction method. For this, we consider first the special case that is the function $f$ and $g$ do not depend on $(Y,Z)$ such that
\begin{eqnarray}
\E\left(\int_{0}^{\infty}|f(t)|^{2}dt+\int_{0}^{\infty}|g(t)|^{2}dG_t\right)<\infty.\label{A5}
\end{eqnarray}
More precisely we have the following reflected GBSDE:
\begin{eqnarray}
Y_t = \xi+ \int_{t}^{\infty}f(s)ds+\int_{t}^{\infty}g(s)dG_s-\int_{t}^{\infty}Z_sdW_s+K_{\infty}-K_t, \;\;t\in[0, \infty].\label{BSDEinfhp}
\end{eqnarray}
\begin{proposition}
Assume that $({\bf A3'})$,$({\bf A4'})$ and $\eqref{A5}$ hold.
Then reflected GBSDE $(\ref{BSDEinfhp})$ associated with $(\xi,f,g,S)$ has a unique solution $(Y, Z, K)$.
\end{proposition}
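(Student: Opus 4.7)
The plan is to reduce the reflected GBSDE to a Snell envelope problem, since when $f$ and $g$ do not depend on $(Y,Z)$ the non-linearity disappears and the equation becomes a linear integral equation with a reflecting obstacle. Concretely, I would first set
\begin{eqnarray*}
A_t=\int_0^t f(s)\,ds+\int_0^t g(s)\,dG_s,\qquad 0\le t\le\infty,
\end{eqnarray*}
and observe that $({\bf A5})$ together with Doob's inequality gives $A_\infty\in L^2$ and $\E\bigl(\sup_t|A_t|^2\bigr)<\infty$. Define the modified obstacle
\begin{eqnarray*}
L_t=S_t+A_t\ (t<\infty),\qquad L_\infty=\xi+A_\infty.
\end{eqnarray*}
Condition $({\bf A4'})(ii)$ ensures $\limsup_{t\to\infty}L_t\le L_\infty$ a.s., and $({\bf A3'})$-$({\bf A4'})(i)$ together with the integrability of $A$ yield $\E\bigl(\sup_t(L_t^+)^2\bigr)<\infty$. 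The key observation is that a triple $(Y,Z,K)$ solves $(\ref{BSDEinfhp})$ with the reflection and flat-off conditions if and only if $R_t:=Y_t+A_t$ is a supermartingale dominating $L_t$ with $R_\infty=L_\infty$ and Doob--Meyer decomposition driven by $Z$ and $K$.

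Next I would take $R_t$ to be the Snell envelope of $L$, that is, $R_t=\mathrm{ess\,sup}_{\nu\ge t}\E[L_\nu\mid\mathcal F_t]$ over stopping times $\nu\in[t,\infty]$. Since $L$ is continuous ($S$ and $G$ are continuous) and of class (D) by the integrability of $\sup_t|L_t|$, the classical theory of Snell envelopes gives that $R$ is the smallest right-continuous supermartingale that dominates $L$, that $R$ is itself continuous, and that $R_\infty=L_\infty$. The Doob--Meyer decomposition of $R$ then reads $R_t=M_t-K_t$, with $M$ a uniformly integrable martingale and $K$ a continuous, adapted, non-decreasing process satisfying $K_0=0$ and the flat-off property
\begin{eqnarray*}
\int_0^\infty(R_t-L_t)\,dK_t=0.
\end{eqnarray*}
Applying the Brownian martingale representation theorem to $M$ produces a predictable process $Z\in\mathcal H^2$ with $M_t=M_0+\int_0^t Z_s\,dW_s$, and standard $L^2$-estimates for Snell envelopes (combined with $L_\infty\in L^2$) give $\E|K_\infty|^2<\infty$ and $R\in\mathcal S^2$.

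Setting $Y_t:=R_t-A_t$ and tracing back the definitions then delivers all the required properties: the equation $(\ref{BSDEinfhp})$ comes from writing $R_t=R_\infty+(K_\infty-K_t)-\int_t^\infty Z_s\,dW_s$ and subtracting $A_t$; the reflection $Y_t\ge S_t$ is exactly $R_t\ge L_t$; the flat-off condition $\int_0^\infty(Y_t-S_t)\,dK_t=0$ is the one written above; and $Y\in\mathcal S^2$ follows from $R,A\in\mathcal S^2$. For uniqueness, given two solutions $(Y,Z,K)$ and $(Y',Z',K')$, apply It�'s formula to $|Y_t-Y'_t|^2$. Because $f,g$ no longer depend on $(Y,Z)$, the generator terms cancel and one obtains
\begin{eqnarray*}
|Y_t-Y'_t|^2+\int_t^\infty|Z_s-Z'_s|^2\,ds
=2\int_t^\infty(Y_s-Y'_s)\,d(K_s-K'_s)-2\int_t^\infty\langle Y_s-Y'_s,(Z_s-Z'_s)dW_s\rangle;
\end{eqnarray*}
the bracket with $d(K-K')$ is non-positive by the flat-off conditions and $Y,Y'\ge S$, and taking expectation (the stochastic integral being a true martingale by the $\mathcal S^2\times\mathcal H^2$ bounds) yields $Y=Y'$, then $Z=Z'$, and finally $K=K'$ from the equation itself.

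The main obstacle will be the verification that the Snell envelope $R$ is continuous and that the associated increasing process $K$ inherits continuity; this relies on the continuity of the obstacle $L$ (hence on the continuity of $G$, which is part of $({\bf A1})$ adapted to the infinite-horizon setting) and on showing that the family $\{L_\nu:\nu\text{ stopping time}\}$ is uniformly integrable, which I expect to extract from $\E[\sup_t(S_t^+)^2]<\infty$, $\xi\in L^2$, and the $L^2$-bound on $A_\infty$. Everything else is a careful bookkeeping of $\mathcal S^2$/$\mathcal H^2$-norms and an application of Burkholder--Davis--Gundy.
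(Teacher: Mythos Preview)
Your proposal is correct and follows essentially the same route as the paper: the paper also defines the shifted obstacle $F_t=\int_0^t f(s)\,ds+\int_0^t g(s)\,dG_s+S_t\mathbf{1}_{t<\infty}+\xi\mathbf{1}_{t=\infty}$ (your $L_t$), takes its Snell envelope, invokes the Doob--Meyer decomposition and martingale representation to extract $K$ and $Z$, and then sets $Y_t=\mathcal{S}_t(F)-A_t$; uniqueness is obtained exactly as you describe via It\^o's formula on $|Y-Y'|^2$ and the sign of the cross term $\int(Y_s-Y'_s)\,d(K_s-K'_s)$. The only point where the paper is more explicit is that it cites an external reference (Ren and Hu) for the bound $\E(K_\infty^2)<\infty$, whereas you plan to derive it from standard $L^2$-estimates for Snell envelopes.
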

\begin{proof}
Let $(F_t)_{0\leq t\leq \infty}$ be the process defined as follows:
\begin{eqnarray*}
F_t=\int^{t}_0 f(s)ds+\int^{t}_0 g(s)dG_s+S_{t}{\bf 1}_{t<\infty}+\xi{\bf 1}_{t=\infty}.
\end{eqnarray*}
Then for $t<\infty$, $F$ is continuous $\mathcal{F}_t$-adapted process and $\sup_{0\leq t\leq \infty}F_t\in L^{2}(\Omega,\mathcal{F}_{\infty})$. So, the Snell envelope of $F$ is the smallest continuous supermartingale which dominates the process $F$ and it is given by:
\begin{eqnarray*}
\mathcal{S}_t(F)=ess\sup_{\nu\in\mathcal{K}_t}\E\left(F_{\nu}|\mathcal{F}_t\right),
\end{eqnarray*}
where $\mathcal{K}_t$ is the set of all $\mathcal{F}_s$-stopping times taking values in $[t, \infty]$. Then, we have
\begin{eqnarray*}
\E\left(\sup_{0\leq t\leq \infty}[\mathcal{S}_t(F)]^2\right)<\infty
\end{eqnarray*}
hence $(\mathcal{S}_t(F))_{0\leq t\leq \infty}$ is of class [D]. Therefore, it has the
following Doob-Meyer decomposition:
\begin{eqnarray*}
\mathcal{S}_t(F)=\E\left(\xi+\int_{0}^{\infty}f(t)ds+\int_{0}^{\infty}g(t)dG_t+K_{\infty}|\mathcal{F}_t\right)-K_t
\end{eqnarray*}
where $(K_t)_{0\leq t\leq\infty}$ is an $\mathcal{F}_t$-adapted continuous non-decreasing process such that $K_0=0$. By the theory of Snell envelope (see Ren and Hu, \cite{RH}) we have $\E(K_\infty)^2<\infty$. Therefore we derive
\begin{eqnarray*}
\E\left[\sup_{0\leq t\leq\infty}\left|\E\left(\xi+\int_{0}^{\infty}f(t)ds+\int_{0}^{\infty}g(t)dG_t+K_{\infty}|\mathcal{F}_t\right)\right|^2\right]<\infty
\end{eqnarray*}
and then, through the martingale representation there exists a continuous uniformly integrable process $(Z_s)_{0\leq s\leq \infty}$such that
\begin{eqnarray*}
M_t&=&\E\left(\xi+\int_{0}^{\infty}f(t)ds+\int_{0}^{\infty}g(t)dG_t+K_{\infty}|\mathcal{F}_t\right)\\
&=&M_0+\int_0^t Z_s dW_s.
\end{eqnarray*}
Now let us set
\begin{eqnarray*}
Y_t=ess\sup_{\nu\in\mathcal{K}_t}\E\left[\int^{\nu}_t f(s)ds+\int^{\nu}_t g(s)dG_s+S_{\nu}{\bf 1}_{\nu<\infty}+\xi{\bf 1}_{\nu=\infty}\right].
\end{eqnarray*}
Then
\begin{eqnarray*}
Y_t+ \int_0^{t}f(s)ds+\int_0^{t}g(s)dG_s&=&\mathcal{S}_t(F)\\
&=& M_t-K_t
\end{eqnarray*}
henceforth, we have
\begin{eqnarray*}
Y_t+ \int_0^{\infty}f(s)ds+\int_0^{\infty}g(s)dG_s=\xi+\int_0^{\infty}f(s)ds+\int_0^{\infty}g(s)dG_s+\int_0^{t}Z_sdW_s-K_t.
\end{eqnarray*}
So, we obtain
\begin{eqnarray*}
Y_t=\xi+ \int_t^{\infty}f(s)ds+\int_t^{\infty}g(s)dG_s+K_{\infty}-K_t-\int_t^{\infty}Z_sdW_s, \; 0\leq t\leq \infty.
\end{eqnarray*}
Since, $Y_t+ \int_0^{t}f(s)ds+\int_0^{t}g(s)dG_s=\mathcal{S}_t(F)$ and $\mathcal{S}_t(F)\geq F_t=\int^{t}_0 f(s)ds+\int^{t}_0 g(s)dG_s+S_{t}{\bf 1}_{t<\infty}+\xi{\bf 1}_{t=\infty}$, then $Y_t\geq S_t$.

Finally, use again the theory of Snell envelope, we know $\int_0^\infty(\mathcal{S}_t(F)-F_t)dK_t=0$ i.e.
\begin{eqnarray*}
\int_0^{\infty}(Y_t-S_t)dK_t=\int_0^\infty(\mathcal{S}_t(F)-F_t)dK_t=0.
\end{eqnarray*}
Therefore, the triple $(Y, Z, K)$ satisfies the
reflected GBSDE $(\ref{BSDEinfhp})$ and properties $(i)$-$(iii)$ above.

Let us prove uniqueness. If $(Y', Z', K')$ is another solution of the reflected
generalized GBSDE $(\ref{BSDEinfhp})$ associated with $(\xi,f,g,S)$ satisfying properties $(i)$-$(iii)$ above, define
$\bar{Y}= Y-Y',\, \bar{Z}= Z-Z'$, and $K = K-K'$. Using Itô's formula to $|\bar{Y}_t|^2$,
\begin{eqnarray}
|\bar{Y}_t|^2+\int_{t}^{\infty}|\bar{Z}_s|^{2}ds=2\int_{t}^{\infty}\bar{Y}_s d\bar{K}_s-2\int_{t}^{\infty}\bar{Y}_s\bar{Z}_s dW_s,\label{uniq}
\end{eqnarray}
by the integrable conditions $(i)$-$(iii)$ and Burkholder-Davis-Gundy's inequality, we have
\begin{eqnarray*}
\E\left(|\bar{Y}_t|^2+\int_{t}^{\infty}|\bar{Z}_s|^{2}ds\right)=2\E\left(\int_{t}^{\infty}\bar{Y}_s d\bar{K}_s\right)\leq 0.
\end{eqnarray*}
So $\E(\bar{Y}_t)= 0$ a.s. for all $t\in[0,\infty]$ and $\E\left(\int_{t}^{\infty}|\bar{Z}_s|^{2}ds\right)= 0$. Then $|\bar{Y}_t|^2 =|\bar{Z}_t|^2=0$ a.s., so
that $Y= Y'$ by the continuity of $\bar{Y}_t$ and $Z=Z'$. Finally, it is easy to get $K = K'$ a.s.
\end{proof}

We now establish the main result of this section.
\begin{theorem}
Assume that $({\bf A2'})$, $({\bf A3'})$ and $({\bf A4'})$ hold. Then
the reflected GBSDE $(\ref{BSDEinfh})$ associated with $(\xi,f,g,S)$ has a unique solution
$(Y, Z, K)$.
\end{theorem}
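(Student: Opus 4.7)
The plan is to treat the general equation~\eqref{BSDEinfh} by a Picard/contraction argument built on top of the Proposition just established. I would introduce a Banach space $\mathcal{B}$ of pairs of progressively measurable processes $(y,z)$ equipped with the weighted norm
$$\|(y,z)\|_{\mathcal{B}}^{2}=\E\!\int_{0}^{\infty}\!e^{A_s}\Bigl[(v_s+(v'_s)^{2})|y_s|^{2}\,ds+\|z_s\|^{2}\,ds+u_s|y_s|^{2}\,dG_s\Bigr],$$
where $A_s=\int_{0}^{s}(2v_r+2(v'_r)^{2}+\rho)\,dr+\int_{0}^{s}2u_r\,dG_r$ and $\rho>0$ is a parameter to be tuned. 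Thanks to~\eqref{estispecial}, $A_{\infty}<\infty$ almost surely, and $e^{A_{\cdot}}$ is bounded above and below away from zero.

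Next I would verify that for every $(y,z)\in\mathcal{B}$ the processes $\tilde f(s):=f(s,y_s,z_s)$ and $\tilde g(s):=g(s,y_s)$ meet hypothesis~\eqref{A5} of the Proposition; this follows from the linear growth in $(\textbf{A2'})$(iii) together with the square integrability of $\varphi,\psi$ in~(iv). The Proposition then yields a unique triple $(Y,Z,K)$ solving the reflected GBSDE with driver $(\tilde f,\tilde g)$, obstacle $S$ and terminal value $\xi$, and I set $\Phi(y,z):=(Y,Z)$. An a~priori estimate modelled on Step~1 of the proof of Theorem~3.1, with the weight $A_{\cdot}$ replacing $\lambda t+\mu G_t$, shows that $\Phi$ maps $\mathcal{B}$ into itself.

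For the contraction, I would take two inputs $(y^{i},z^{i})$ with images $(Y^{i},Z^{i},K^{i})$, $i=1,2$, and apply It\^o's formula to $e^{A_t}|\bar Y_t|^{2}$ on $[t,T]$ with $\bar Y=Y^{1}-Y^{2}$, then let $T\to\infty$. The Skorokhod complementarity $\int_{0}^{\infty}\bar Y_{s}\,d\bar K_{s}\le 0$ kills the reflection contribution, and Young's inequality applied to the cross terms through $(\textbf{A2'})$(i)--(ii) produces $|\bar Y_s|^{2}$ factors that are exactly absorbed by the infinitesimal growth of $A$. What remains is an estimate of the shape
$$\|(\bar Y,\bar Z)\|_{\mathcal{B}}^{2}\le\frac{C}{\rho}\,\|(\bar y,\bar z)\|_{\mathcal{B}}^{2},$$
which for $\rho$ large is a strict contraction. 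Banach's fixed-point theorem then provides the unique fixed point, and reading off the remaining regularity required by the definition of solution ($Y\in\mathcal{S}^{2}$ via Burkholder-Davis-Gundy, $K_{\infty}\in L^{2}$ via the Doob-Meyer bound inherited from the Proposition) identifies this fixed point as the solution of~\eqref{BSDEinfh}.

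Uniqueness at the level of the theorem is proved by the very same weighted It\^o computation, applied directly to the difference of two candidate solutions, with the monotonicity $(\textbf{A2'})$(iii) on $g$ playing the role of the Lipschitz bound in the $dG$ integral. The main obstacle I anticipate is not the contraction itself, which is routine once the norm is chosen, but rather the bookkeeping required to show that $\Phi$ preserves $\mathcal{B}$: because the ``Lipschitz constants'' $v,v',u$ are only integrable rather than bounded, every estimate has to be carried out with the stochastic weight $A$ throughout, and one must carefully balance the $dt$ and $dG$ contributions when estimating the reflection term.
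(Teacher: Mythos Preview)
Your weighted-norm contraction has a fatal flaw on the infinite horizon. You define
\[
A_s=\int_{0}^{s}\bigl(2v_r+2(v'_r)^{2}+\rho\bigr)\,dr+\int_{0}^{s}2u_r\,dG_r
\]
with a constant $\rho>0$ to be chosen large, and then assert that $A_{\infty}<\infty$ thanks to~\eqref{estispecial}. But \eqref{estispecial} only controls $\int_{0}^{\infty}(v_r+(v'_r)^{2})\,dr$ and $\int_{0}^{\infty}u_r\,dG_r$; the extra term $\int_{0}^{\infty}\rho\,dr=\infty$ forces $A_{\infty}=\infty$, so $e^{A_{\cdot}}$ is \emph{not} bounded above and the space $\mathcal{B}$ collapses. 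Conversely, if you drop the $\rho$ to keep $A_{\infty}<\infty$, the It\^o computation on $e^{A_t}|\bar Y_t|^{2}$ no longer produces any spare positive factor on the left-hand side, and you cannot manufacture a contraction constant smaller than~$1$. The trick of ``choose the exponential weight large enough'' is intrinsically a finite-horizon device; on $[0,\infty)$ it is unavailable.

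The paper circumvents this by a two-stage argument that does not rely on tuning a weight. First, under the \emph{smallness} assumption $\bigl(\int_{0}^{\infty}v_s\,ds+u_s\,dG_s\bigr)^{2}+\int_{0}^{\infty}(v'_s)^{2}\,ds<\tfrac{1}{24}$, the map $\Psi(U,V)=(Y,Z)$ is shown to be a contraction on the \emph{unweighted} space $\mathcal{S}^{2}\times\mathcal{H}^{2}$; the crucial estimate here is not It\^o's formula but the explicit Snell-envelope representation of $Y$ from the Proposition, which gives $|\bar Y_t|\le \E\bigl[\int_{0}^{\infty}|\bar f|\,ds+\int_{0}^{\infty}|\bar g|\,dG_s\,\big|\,\mathcal{F}_t\bigr]$ and hence, via Doob's inequality, a direct bound on $\|\bar Y\|_{\mathcal{S}^{2}}$. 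Second, in the general case one picks $T_0$ so large that the smallness condition holds on $[T_0,\infty)$, solves there by Step~1, and then solves the remaining finite-horizon problem on $[0,T_0]$ by the existing literature, gluing the two pieces. If you want to repair your argument, you should either import this Snell-envelope estimate to get a contraction without an artificial $\rho$, or adopt the time-splitting reduction.
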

\begin{proof}
We first prove the uniqueness. Let $(Y, Z, K)$ and $(Y', Z', K')$ be two solutions
of the reflected GBSDE $(\ref{BSDEinfh})$ associated with $(\xi,f,g, S)$. By use the same
notation as in Proposition 3.1 and applying Itô's formula to $|\bar{Y}_t|^{2},$ we have
\begin{eqnarray*}
|\bar{Y}_t|^{2}+\int^{\infty}_{t}|\bar{Z}_s|^{2}ds &=& 2 \int^{\infty}_t\bar{Y}_s(f(s,Y_s,Z_s)-f(s,Y'_s,Z'_s))ds+2 \int^{\infty}_t\bar{Y}_s(g(s,Y_s)-g(s,Y'_s))dG_s\\
&&+2\int^{\infty}_t\bar{Y}_s d\bar{K}_s-2 \int^{\infty}_t\bar{Y}_s\bar{Z}dW_s.
\end{eqnarray*}
Then
\begin{eqnarray}
\E\left(|\bar{Y}_t|^{2}+\int^{\infty}_{t}|\bar{Z}_s|^{2}ds\right) &\leq& 2 \E\int^{\infty}_t|\bar{Y}_s|(v_s|\bar{Y}_s|+v'_s|\bar{Z}_s|)ds\nonumber\\
&&+2\beta\E\int^{\infty}_t|\bar{Y}_s|^{2}dG_s+2\E\int^{\infty}_t\bar{Y}_s d\bar{K}_s\nonumber\\
&\leq&\frac{1}{2}\E\int^{\infty}_t|\bar{Z}_s|^2ds+ \E\int^{\infty}_t(2v_s+2v'^{2}_s)|\bar{Y}_s|^{2}ds\label{estuniqueness}
\end{eqnarray}
From Gronwall's lemma we obtain $\E|\bar{Y}_t|^{2}=0$ for all $t\in [0,\infty]$. Then $|\bar{Y}_t|^2= 0$
as., so $Y = Y'$ by the continuity of $\bar{Y}_t$. Now, going back to $(\ref{estuniqueness})$, we have
\begin{eqnarray*}
\E\int^{\infty}_0|\bar{Z}_s|^2 ds\leq  \E\sup_{0\leq t\leq \infty}|\bar{Y}_s|^{2}\int^{\infty}_0(2v_s+2v'^{2}_s)ds,
\end{eqnarray*}
so $\E\int^{\infty}_0|\bar{Z}_s|^2 ds= 0$. Then it is easy to get $K_t = K'_t$.

At last, we prove the existence of (\ref{BSDEinfh}). It is divided into two steps.

{\bf Step 1.} Assume $(\int^{\infty}_0 v_sds+u_sdG_s)^2+\int^{\infty}_0 v'^{2}_s ds < \frac{1}{24}$.\newline
Let us denote $\mathcal{D}=\mathcal{S}^2\times\mathcal{H}^2$ and $\|(Y, Z)\|_{\mathcal{D}}= \|Y\|^{2}_{\mathcal{S}^2}+\|Z\|^{2}_{\mathcal{H}^2}$. We define a mapping $\Psi:\mathcal{D}\rightarrow\mathcal{D}$ as follows: for any $(U, V )\in\mathcal{D},\; (Y, Z ) = \Psi(U, V)$ is a
element of $\mathcal{D}$ such that $(Y,Z,K)$ is a unique solution to reflected GBSDE associated with $(\xi, f (s, U_s, V_s),g(s,U_s),S)$. Similarly we define $(Y', Z') = \Psi(U',V ')$ for $(U', V')\in\mathcal{D}$ and set $\bar{U} = U-U',\; \bar{V}= V-V',\; \bar{Y}= Y-Y',\;
\bar{Z}= Z-Z',\;\bar{K} = K-K', \; \bar{f}=f (s, U_s, V_s)-f(s, U'_s, V'_s)$ and $\bar{g}=g (s, U_s)-g(s, U'_s)$. From above we have
\begin{eqnarray*}
Y_t&=&ess\sup_{\nu\in\mathcal{K}_t}\E\left(\int^{\nu}_t f(s,U_s,V_s)ds+\int^{\nu}_t g(s,U_s)dG_s+S_{\nu}{\bf 1}_{\nu<\infty}+\xi{\bf 1}_{\nu=\infty}|\mathcal{F}_t\right),\\
Y'_t&=&ess\sup_{\nu\in\mathcal{K}_t}\E\left(\int^{\nu}_t f(s,U'_s,V'_s)ds+\int^{\nu}_t g(s,U'_s)dG_s+S_{\nu}{\bf 1}_{\nu<\infty}+\xi{\bf 1}_{\nu=\infty}|\mathcal{F}_t\right).
\end{eqnarray*}
Then
\begin{eqnarray*}
|\bar{Y}_{t}|&\leq& ess\sup_{\nu\in\mathcal{K}_{t}}\E\left(\int^{\nu}_t|\bar{f}(s)|ds+\int^{\nu}_t |\bar{g}(s)|dG_s|\mathcal{F}_t\right)\\
&\leq&\E\left(\int^{\infty}_{0} |\bar{f}(s)|ds+\int^{\infty}_{0}|\bar{g}(s)|dG_s|\mathcal{F}_{t}\right)
\end{eqnarray*}
which provides
\begin{eqnarray*}
\E\left(\sup_{0\leq t\leq \infty}|\bar{Y}_t|^{2}\right)&\leq&\E\left[\sup_{0\leq t\leq \infty}\E\left(\int^{\infty}_0|\bar{f}(s)|ds+\int^{\infty}_0 |\bar{g}(s)|dG_s|\mathcal{F}_t\right)^{2}\right] \\
&\leq&4\E\left(\int^{\infty}_0 |\bar{f}(s)|ds+\int^{\infty}_0 |\bar{g}(s)|dG_s\right)^{2}
\end{eqnarray*}
by Doob's inequality. Using Itô's formula to $|\bar{Y}_t|^2$, we get
\begin{eqnarray*}
|\bar{Y}_t|^{2}+\int^{\infty}_{t}|\bar{Z}_s|^{2}ds&=& 2 \int^{\infty}_t\bar{Y}_s\bar{f}(s)ds+2 \int^{\infty}_t\bar{Y}_s\bar{g}(s)ds+2\int^{\infty}_t\bar{Y}_s d\bar{K}_s-2 \int^{\infty}_t\bar{Y}_s\bar{Z}_sdW_s.\\
&\leq& 2 \int^{\infty}_t\bar{Y}_s\bar{f}(s)ds-2 \int^{\infty}_t\bar{Y}_s\bar{Z}_sdW_s.
\end{eqnarray*}
Then
\begin{eqnarray*}
\E\left(\int^{\infty}_{t}|\bar{Z}_s|^{2}ds\right) &\leq& 2\int^{\infty}_0\bar{Y}_s\bar{f}(s)ds\\
 &\leq&\E\left(\sup_{0\leq t\leq \infty}|Y_t|^{2}\right)+\E\left(\int^{\infty}_0|\bar{f}(s)|ds\right)^{2}.\\
 &\leq &4\E\left(\int^{\infty}_0[|\bar{f}(s)|ds+|\bar{g}(s)|dG_s]\right)^{2}+\E\left(\int^{\infty}_0|\bar{f}(s)|ds\right)^{2}.
\end{eqnarray*}
From $({\bf A2'})$ we get
\begin{eqnarray*}
&&\E\left(\int^{\infty}_0[|\bar{f}(s)|ds+|\bar{g}(s)|dG_s]\right)^{2}+\E\left(\int^{\infty}_0|\bar{f}(s)|ds\right)^{2}\\
&\leq& \E\left(\int_{0}^{\infty}(v_s|\bar{U}_s|+v'_s|\bar{V}_s|)ds+u_s|\bar{U}_s|dG_s\right)^{2}\\
&\leq&4\left[\left(\int^{\infty}_0v_sds+u_sdG_s\right)^2+\int^{\infty}_0v'^{2}ds\right]\|(\bar{U},\bar{V})\|_{\mathcal{D}}.
\end{eqnarray*}
At last, we have
\begin{eqnarray}
\|(\bar{Y},\bar{Z})\|_{\mathcal{D}}\leq 24\left[\left(\int^{\infty}_0v_sds+u_sdG_s\right)^2+\int^{\infty}_0v'^{2}ds\right]\|(\bar{U},\bar{V})\|_{\mathcal{D}}.
\end{eqnarray}
From the inequality $(\int^{\infty}_0 v_sds+u_sdG_s)^2+\int^{\infty}_0 v'^{2}_s ds < \frac{1}{24}$ we infer that $\Psi$ is a strict
contraction and has a unique fixed point, which is a unique solution of the
reflected GBSDE $(\ref{BSDEinfh})$.

{\bf Step 2.} For the general case i.e \eqref{estispecial}, there exists $T_0 > 0$ such that
\begin{eqnarray*}
\left(\int^{\infty}_{T_0} v_sds+u_sdG_s\right)^2+\int^{\infty}_{T_0} v'^{2}_s ds < \frac{1}{24}.
\end{eqnarray*}
From Step 1 we know that the reflected GBSDE
\begin{eqnarray}
\widehat{Y}_t&=&\xi+\int_{t}^{\infty}{\bf 1}_{\{s\geq T_0\}}f(s,\widehat{Y}_s,\widehat{Z}_s)ds+\int_{t}^{\infty}{\bf 1}_{\{s\geq T_0\}}g(s,\widehat{Y}_s)ds\nonumber\\
&&-\int_{t}^{\infty}\widehat{Z}_sdW_s+\widehat{K}_{\infty}-\widehat{K}_t,\,\, 0\leq t\leq \infty,\label{BSDEinfh1}
\end{eqnarray}
has a unique solution $(\widehat{Y},\widehat{Z},\widehat{K})$. Then
we consider the reflected GBSDE
\begin{eqnarray}
\widetilde{Y}_t&=&\xi+\int_{t}^{T_0}f(s,\widetilde{Y}_s,\widetilde{Z}_s)ds+\int_{t}^{T_0}g(s,\widetilde{Y}_s)ds\nonumber\\
&&-\int_{t}^{T_0}\widetilde{Z}_sdW_s+\widetilde{K}_{T_0}-\widetilde{K}_t,\,\, 0\leq t\leq T_{0}.\label{BSDEinfh2}
\end{eqnarray}
It follows from \cite{Ral}, the existence of a unique solution $(\widetilde{Y},\widetilde{Z},\widetilde{K})$ of reflected GBSDE $(\ref{BSDEinfh2})$.

Let us set
\begin{eqnarray*}
Y_t=
\left\{
\begin{array}{l}
\widetilde{Y}_t,\;\;\; t\in[0,T_0],\\\\
\widehat{Y}_t,\;\;\; t\in[T_0,\infty],
\end{array}\right.
\;\;\;
Z_t=
\left\{
\begin{array}{l}
\widetilde{Z}_t,\;\;\; t\in[0,T_0],\\\\
\widehat{Z}_t,\;\;\; t\in[T_0,\infty],
\end{array}\right.
\;\;\;K_t=
\left\{
\begin{array}{l}
\widetilde{K}_t,\;\;\; t\in[0,T_0]\\\\
\widetilde{K}_{T_0}+\widehat{K}_t-\widehat{K}_{T_0},\;\;\; t\in[T_0,\infty].
\end{array}\right.
\end{eqnarray*}
If $t\in[T_0,\infty],\; (\widehat{Y}_t,\widehat{Z}_t,\widehat{K}_t)$ is the solution of $(\ref{BSDEinfh1})$, and then $(\widehat{Y}_t,\widehat{Z}_t,\widetilde{K}_{T_0}+\widehat{K}_t-\widehat{K}_{T_0})$ also satisfies $(\ref{BSDEinfh1})$. Now, if $t\in[0, T_0]$ ,\; $(\widetilde{Y}_t,\widetilde{Z}_t,\widetilde{K}_t)$ is the solution of $(\ref{BSDEinfh2})$ and $\widetilde{Y}_{T_0} = \widehat{Y}_{T_0},\; \widetilde{K}_{T_0} = \widetilde{K}_{T_0}+\widehat{K}_{T_0}-\widehat{K}_{T_0}$. So $Y$ and $K$ are continuous, and $(Y, Z, K)$ is a unique solution of reflected GBSDE $(\ref{BSDEinfh})$.
\end{proof}
\begin{remark}
If the random variable $\xi\equiv 0$ a.s,  the condition $({\bf A3})$ remain true and Theorem $4.2$  is available with assumptions  $({\bf A1})$-$({\bf A4})$.
The proof follows steps of proof of Theorem $3.1$ taking $\tau=\infty$
\end{remark}
\section{Applications}

In this section, we consider reflected GBSDEs in Markovian framework and
stated is related to an American option pricing as well as is related to a
probabilistic representation of the viscosity solution of an obstacle
problem of elliptic type.

\subsection{A class of reflected diffusion process}

Let $b:{\Bbb R}^{d}\longrightarrow {\Bbb R}^{d},$ $\sigma :{\Bbb R}%
^{d}\longrightarrow {\Bbb R}^{d\times d}$ be functions such that
\[
\left| b\left( x\right) -b\left( x^{\prime }\right) \right| +\left| \sigma
\left( x\right) -\sigma \left( x^{\prime }\right) \right| \leq K\left|
x-x^{\prime }\right| .
\]
Let $\Theta $ be an open connected bounded subset of ${\Bbb
R}^{d},$ which is that for a function $\phi \in {\cal
C}_{b}^{2}({\Bbb R}^{d}),\Theta =\left\{ \phi >0\right\} ,$
$\partial \Theta =\left\{ \phi =0\right\} ,$ and $\left|
\bigtriangledown \phi \left( x\right) \right| =1,$ $x\in \partial
\Theta .$ Note that at any boundary point $x\in \partial \Theta ,$ $%
\bigtriangledown \phi \left( x\right) $ is a unit normal vector to
the boundary, pointing towards the interior of $\partial \Theta
.$\newline By Lions and Szitman \cite{LS} (see also Saisho
\cite{S}) for each $x\in \overline{\Theta }$ there exists a unique
pair of progressively measurable continuous processes $\left\{
(X_{s}^{x},G_{s}^{x}):t\geq 0\right\} $, with values in
$\overline{\Theta }\times {\Bbb R}_{+,}$ such that

\allowdisplaybreaks\begin{align}
s\mapsto &G_{s}^{x} \,\,\,\hbox{is increasing},\nonumber\\
X_{s}^{x}&=x+\int_{0}^{s}b(X_{r}^{x})dr+\int_{0}^{s}\sigma
(X_{r}^{x})dW_{r}+\int_{0}^{s}\nabla \phi (X_{r}^{x})dG_{r}^{x},\;\; s\geq 0, \nonumber\\
G_{s}^{x}&=\int_{0}^{s}1_{\left\{ X_{r}^{x}\in \partial \Theta
\right\} }dG_{r}^{x}.
  \label{k*}
\end{align}

Let state some properties of processes $\left\{ (X_{s}^{x},G_{s}^{x}),s\geq
0\right\} .$ We refer the reader to Pardoux and Zhang, \cite{PZ}.

\begin{proposition}
For each $T\geq 0,$ there exits a constant $C_T$ such that for all $%
x,x^{^{\prime }}\in \overline{\Theta }$

\begin{eqnarray*}
\E\left(\sup_{0\leq s\leq
T}|X_{s}^{x}-X_{s}^{x^{^{\prime
}}}|^{4}\right)\leq C_T|x-x^{^{\prime }}|^{4}
\end{eqnarray*}
and
\begin{eqnarray*}
\E\left(\sup_{0\leq s\leq
T}|G_{s}^{x}-G_{s}^{x^{^{\prime }}}|^{4}\right)\leq C_T|x-x^{^{\prime
}}|^{4}.
\end{eqnarray*}
Moreover, there exists a constant $C_p$ such that for all $(t,x)\in {\Bbb R}%
_{+}\times \overline{\Theta }$,
\begin{eqnarray*}
\E(|G_{t}^{x}|^{p})\leq C_p\left(
1+t ^{p}\right),
\end{eqnarray*}
and for each $\mu ,t>0,$ there exists $C_{\mu,t}$ such that
for all $x\in \overline{\Theta }$,
\begin{eqnarray*}
\E\left( e^{\mu G_{t}^{x}}\right)\leq C_{\mu,t}.
\end{eqnarray*}
\end{proposition}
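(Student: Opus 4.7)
The plan is to prove the four estimates in the order \textbf{(c)}--\textbf{(d)}--\textbf{(a)}--\textbf{(b)}, since the continuity estimates will require the exponential moment bound on the local time as a preliminary. Throughout, the main tool is It\^o's formula applied either to $\phi(X^x_s)$ (to extract information about $G^x$) or to an exponentially weighted power of $|X^x_s-X^{x'}_s|$ (to control the boundary terms coming from the local time).

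For the estimates on $G^x$ alone, I would apply It\^o to $\phi(X^x_s)$, giving
\[
\phi(X^x_t)=\phi(x)+\int_0^t \mathcal{L}\phi(X^x_s)\,ds+\int_0^t(\sigma^\top\nabla\phi)(X^x_s)\,dW_s+\int_0^t|\nabla\phi(X^x_s)|^2\,dG^x_s.
\]
Because $dG^x_s$ is supported on $\{X^x_s\in\partial\Theta\}$ and $|\nabla\phi|=1$ there, the last integral equals $G^x_t$. Since $\phi,\mathcal{L}\phi$ are bounded on $\overline{\Theta}$ and $\sigma^\top\nabla\phi$ is bounded, rearranging expresses $G^x_t$ as a bounded deterministic drift of order $1+t$ plus a continuous martingale $M^x_t$ with $\langle M^x\rangle_t\le Ct$. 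BDG then yields $\mathbb{E}|G^x_t|^p\le C_p(1+t^p)$, proving (c). For (d), the bound $G^x_t\le C(1+t)+|M^x_t|$ combined with the exponential martingale supermartingale $\exp(2\mu M^x_t-2\mu^2\langle M^x\rangle_t)$ and Cauchy--Schwarz gives $\mathbb{E}e^{\mu|M^x_t|}\le e^{C\mu^2 t}$, hence $\mathbb{E}e^{\mu G^x_t}\le e^{\mu C(1+t)+C\mu^2 t}=:C_{\mu,t}$.

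For (a), I would apply It\^o's formula to $|X^x_s-X^{x'}_s|^{4}$. The drift and diffusion contributions yield the usual Lipschitz term $C|X^x_s-X^{x'}_s|^{4}\,ds$ by assumption on $b,\sigma$. The delicate terms come from the local times: $4|X^x_s-X^{x'}_s|^{2}\langle X^x_s-X^{x'}_s,\nabla\phi(X^x_s)\rangle dG^x_s$ and the symmetric one in $dG^{x'}_s$. On the support of $dG^x_s$ we have $\phi(X^x_s)=0\le\phi(X^{x'}_s)$, so a second-order Taylor expansion of $\phi$ around $X^x_s$ together with $\|D^2\phi\|_\infty<\infty$ gives
\[
\langle X^x_s-X^{x'}_s,\nabla\phi(X^x_s)\rangle\le\tfrac12\|D^2\phi\|_\infty|X^x_s-X^{x'}_s|^2,
\]
so these boundary terms are dominated by $C|X^x_s-X^{x'}_s|^{4}\,d(G^x+G^{x'})_s$. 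To handle the fact that the Gronwall integrator is stochastic, I would work with the weighted quantity $e^{-\alpha(s+G^x_s+G^{x'}_s)}|X^x_s-X^{x'}_s|^{4}$ for $\alpha$ large enough to kill the non-martingale contributions, take expectations, apply BDG to the martingale part, and obtain $\mathbb{E}\sup_{s\le T}e^{-\alpha(s+G^x_s+G^{x'}_s)}|X^x_s-X^{x'}_s|^{4}\le C_T|x-x'|^{4}$. Removing the exponential weight by H\"older, using the boundedness of $\overline{\Theta}$ together with the exponential moment (d), yields (a).

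Finally, (b) follows by subtracting the It\^o expansions of $\phi(X^x_s)$ and $\phi(X^{x'}_s)$, which isolates $G^x_t-G^{x'}_t$ as
\[
\bigl(\phi(X^x_t)-\phi(X^{x'}_t)\bigr)-\bigl(\phi(x)-\phi(x')\bigr)-\int_0^t\bigl(\mathcal{L}\phi(X^x_s)-\mathcal{L}\phi(X^{x'}_s)\bigr)ds-\int_0^t\bigl((\sigma^\top\nabla\phi)(X^x_s)-(\sigma^\top\nabla\phi)(X^{x'}_s)\bigr)dW_s.
\]
All four terms are Lipschitz in $(X^x,X^{x'})$ since $\phi\in\mathcal{C}^2_b$ and $\sigma$ is Lipschitz, so after taking supremum, fourth power, applying BDG to the martingale term, and invoking (a), one obtains $\mathbb{E}\sup_{s\le T}|G^x_s-G^{x'}_s|^{4}\le C_T|x-x'|^{4}$. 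The main obstacle is step (a): controlling the boundary drifts requires both the admissibility condition $|\nabla\phi|=1$ on $\partial\Theta$ used in a Taylor-type convexity argument, and the exponential weighting to convert a Gronwall inequality driven by the random measure $dG^x+dG^{x'}$ into a deterministic estimate.
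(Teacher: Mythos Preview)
The paper gives no proof of this proposition at all; it simply states the four estimates and refers the reader to Pardoux and Zhang \cite{PZ}. Your proposal is correct and is essentially the argument carried out in that reference (Propositions~3.1 and~3.2 there): It\^o's formula applied to $\phi(X^x)$ isolates $G^x$ as a bounded drift plus a martingale with linearly growing bracket, which gives (c) and (d); the second-order Taylor bound $\langle X^x_s-X^{x'}_s,\nabla\phi(X^x_s)\rangle\le C|X^x_s-X^{x'}_s|^2$ on the support of $dG^x_s$, combined with the exponential weight in $G^x+G^{x'}$, handles (a); and subtracting the two $\phi$-expansions yields (b).

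One small point worth tightening in step (a): when you ``remove the exponential weight by H\"older'', the bound
\[
\E\Bigl(\sup_{s\le T}e^{-\alpha(s+G^x_s+G^{x'}_s)}|X^x_s-X^{x'}_s|^{4}\Bigr)\le C_T|x-x'|^{4}
\]
is not quite enough on its own, since H\"older against $e^{\alpha(T+G^x_T+G^{x'}_T)}$ costs half the exponent and would only return $|x-x'|^{2}$. The fix is routine: run the same It\^o computation for the power $4q$ with some $q>1$ (choosing $\alpha$ correspondingly larger), obtain the weighted bound with $|x-x'|^{4q}$ on the right, and then apply H\"older. This does not affect your overall strategy.
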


Since we state in Markovian framework, the $(\xi ,f,g,S)$ are defined as follows:
\[f(s,y,z)=f(s,X_{s}^{x},y,z),\mbox{ }
g(s,y)=g(s,X_{s}^{x},y), \mbox{ }S_{s}=h(X_{s}^{x}),
\]
where $f$, $g$ satisfy the previous assumptions as we have in random finite horizon or infinite horizon and $h\in {\cal C}(\R^{d};\R)$ with most polynomial growth at infinity.

\subsection{American option pricing revisited}
In this section, we use the result on infinite horizon reflected GBSDEs with one barrier
to deal with optimal stopping time problem. Roughly speaking, let us consider the following reflected GBSDE:
\begin{enumerate}
\item
\begin{eqnarray}
Y_{s}^{x}&=&\xi+
\int_{s}^{\infty}f(r,X_{r}^{x},Y_{r}^{x},Z_{r}^{x})dr+%
\int_{s}^{\infty}g(r,X_{r}^{x},Y_{r}^{x})dG_{r}^{x}\nonumber\\
&&-\int_{s}^{\infty}Z_{r}^{x}dW_{r}+K_{\infty}^{x}-K_{s}^{x},\; 0\leq s\leq \infty,\label{C1}
\end{eqnarray}
\item $Y_{s}^{x}\geq h(X_{s}^{x})$,
\item $\E\left(\sup_{0\leq t\leq \infty}|Y^{x}_t|^{2}+\int_{0}^{\infty}\left|Z_{r}^{x}\right|^{2}dr\right) <+\infty$,
\item \, $K_{s}^{x}$ is an increasing process such that $K_{0}=0$ and $\int_{0}^{\infty}(Y_{s}^{x}-h(X_{s}^{x}))dK_{s}^{x}=0$.
\end{enumerate}
From Theorem 4.1, the previous reflected GBSDE has a unique solution $\left(Y^{x},Z^{x},K^{x}\right)$.
Unlike of the work of Cvitanic and Ma, \cite{CM}, we interpret $X^x$ in \eqref{k*} as a price process of financial assets which might affect the wealth of a controller and forced to live in a bounded domain; $Y^x$ and $Z^x$ are the wealth process and the trading strategy, respectively, of a "small" investor or a "small" shareholder in the market in the sense that both $Y^x$ and $Z^x$ might no affect the price $X^x$. The investor acts to protect his advantages so that he has possibility at any time $\theta\in\mathcal{K}$ (set of all $\mathcal{F}_s$-stopping time with values in $[0,\infty]$) to stop controlling. The control is not free. We define the pay off  by
\begin{eqnarray*}
R(\theta) &=&\E\left\{\int_{0}^{\theta
}f(r,X_{r}^{x},Y_{r}^{x},Z_{r}^{x})dr+\int_{0}^{\theta
}g(r,X_{r}^{x},Y_{r}^{x})dG_{r}^{x}\right. \\
&& \left. +h(X_{\theta }^{x}){\bf 1}_{\left\{ \theta <\infty\right\}
}+\xi{\bf 1}_{\left\{\theta =\infty\right\} }\right\}
\end{eqnarray*}
for all $\theta\in\mathcal{K}$. For the investor, $f(X^x,Y^x,Z^x)$, (resp. $f(X^x,Y^x,Z^x)+g(X^x,Y^x)\dot{G}^x$) is the instantaneous reward on $\Theta$ (resp. on $\partial\Theta$), and $h(X^{x})$ and $\xi$ are respectively the rewards if he decides to stop before or until infinite time.
The problem is to look for an optimal strategy for the investor, i.e. a strategy $\widehat{\theta}$ such that
\begin{eqnarray*}
R(\theta)\leq R(\widehat{\theta})\;\;\;\mbox{for all}\;\;  \theta\in\mathcal{K}.
\end{eqnarray*}

Now we give the main result of this section, an analogue of that in Cvitanic and Ma, \cite{CM}.
\begin{theorem}
Let $(Y_{.}^{x},Z_{.}^{x},K_{.}^{x})$ be a unique solution of reflected
GBSDE $(\ref{C1})$. Then there exists an
optimal stopping time given by
\begin{eqnarray*}
\widehat{\theta}=\
\left\{
\begin{array}{l}
inf \left\{ t\in [0,\infty),\;\;Y_{t}^{x}\leq h(X_{t}^{x})\right\},\\\\
\infty\;\;\;\; otherwise.
\end{array}\right.
\end{eqnarray*}
Then $Y^{x}_0$ = $R(\widehat{\theta})$, and $\widehat{\theta}$ is an optimal strategy for the investor.
\end{theorem}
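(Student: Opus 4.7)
The plan is to follow the standard Snell-envelope / reflected-BSDE optimality argument, adapted to our infinite-horizon setting. There are two inequalities to prove: (a) $Y_0^x\ge R(\theta)$ for every $\theta\in\mathcal{K}$, and (b) $Y_0^x= R(\widehat{\theta})$. Together these yield both claims of the theorem.

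For (a), the idea is to apply the reflected GBSDE \eqref{C1} on the random interval $[0,\theta]$, obtaining
\begin{eqnarray*}
Y_0^x=Y_\theta^x+\int_0^{\theta}f(r,X_r^x,Y_r^x,Z_r^x)\,dr+\int_0^{\theta}g(r,X_r^x,Y_r^x)\,dG_r^x+K_\theta^x-\int_0^\theta Z_r^xdW_r,
\end{eqnarray*}
where we interpret the integrals on $\{\theta=\infty\}$ as limits, using properties (2)–(3) of the solution. Taking expectations, the stochastic integral vanishes because $Z^x\in\mathcal{H}^2$ implies $\int_0^\cdot Z_r^xdW_r$ is a uniformly integrable martingale. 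Then I would invoke the two sign constraints: $K_\theta^x\ge 0$ since $K^x$ is non-decreasing with $K_0^x=0$, and $Y_\theta^x\ge h(X_\theta^x)\mathbf{1}_{\{\theta<\infty\}}+\xi\mathbf{1}_{\{\theta=\infty\}}$ by property (2) together with $Y_\infty^x=\xi$. This directly yields $Y_0^x\ge R(\theta)$.

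For (b), the key is to show that at $\widehat{\theta}$ both inequalities collapse to equalities. On the event $\{t<\widehat{\theta}\}$ we have $Y_t^x>h(X_t^x)$ by definition of $\widehat{\theta}$, hence the Skorohod condition $\int_0^{\infty}(Y_s^x-h(X_s^x))\,dK_s^x=0$ forces $dK^x$ to put no mass on $[0,\widehat{\theta})$, so $K_{\widehat{\theta}}^x=0$. By the continuity of $Y^x$ and of $X^x$ (and thus of $s\mapsto h(X_s^x)$), on $\{\widehat{\theta}<\infty\}$ we get $Y_{\widehat{\theta}}^x=h(X_{\widehat{\theta}}^x)$, and on $\{\widehat{\theta}=\infty\}$ we have $Y_\infty^x=\xi$. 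Inserting these into the same identity as in (a) with $\theta=\widehat{\theta}$ gives $Y_0^x=R(\widehat{\theta})$.

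The main obstacle will be the infinite-horizon technicalities: justifying the martingale property of $\int_0^{\widehat{\theta}}Z^x_rdW_r$ up to $\widehat{\theta}=\infty$, and showing that the terminal values $Y_\infty^x=\xi$ and the integrals over $[0,\infty]$ are well-defined limits in $L^1$. These follow from the integrability bounds built into the definition of a solution ($Y^x\in\mathcal{S}^2$, $Z^x\in\mathcal{H}^2$, $K^x_\infty\in L^2$) combined with the assumption $\limsup_{t\nearrow\infty}S_t\le \xi$ from $(\mathbf{A4'})$, plus the Burkholder–Davis–Gundy inequality for the stochastic-integral term. Once these integrability points are secured, the comparison $Y_0^x\ge R(\theta)=R(\widehat{\theta})$ combined with $Y_0^x=R(\widehat{\theta})$ makes $\widehat{\theta}$ an optimal strategy, completing the proof.
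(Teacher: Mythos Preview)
Your proposal is correct and follows essentially the same approach as the paper: write the reflected GBSDE on $[0,\theta]$, take expectations (using $Z^{x}\in\mathcal{H}^{2}$ so the stochastic integral is a martingale), then invoke $K_{\theta}^{x}\ge 0$ and $Y_{\theta}^{x}\ge h(X_{\theta}^{x})\mathbf{1}_{\{\theta<\infty\}}+\xi\mathbf{1}_{\{\theta=\infty\}}$ for the inequality, and the Skorohod condition plus continuity to collapse both to equalities at $\widehat{\theta}$. The only cosmetic difference is that the paper treats $\widehat{\theta}$ first and then the general $\theta$, and it is somewhat less explicit than you are about the infinite-horizon integrability justifications.
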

\begin{proof}
Since $(Y^{x}, Z^x, K^x)$ is a unique solution of reflected
GBSDE $(\ref{C1})$, $Y^x_0$ is deterministic and we have
\begin{eqnarray}
Y_{0}^{x}=\E(Y^x_0)&=&\E\left(\xi+\int_{0}^{\infty}f(X_{r}^{x},Y_{r}^{x},Z_{r}^{x})dr+\int_{0}^{\infty}g(r,X_{r}^{x},Y_{r}^{x})dG_{r}^{x}\right.\nonumber\\
&&\left.-\int_{0}^{\infty}Z_{r}^{x}dW_{r}+K_{\infty}^{x}\right)\nonumber\\
&=&\E\left(Y^{x}_{\widehat{\theta}}+\int_{0}^{\widehat{\theta}}f(X_{r}^{x},Y_{r}^{x},Z_{r}^{x})dr+\int_{0}^{\widehat{\theta}}g(r,X_{r}^{x},Y_{r}^{x})dG_{r}^{x}\right.\nonumber\\
&&\left.-\int_{0}^{\widehat{\theta}}Z_{r}^{x}dW_{r}+K_{\widehat{\theta}}^{x}\right)
\label{C2}
\end{eqnarray}
In view of $\widehat{\theta}$ and reflected GBSDE's properties one knows that
the process $K_t$ does not increase between $0$ and $\widehat{\theta}$, hence then $K_{\widehat{\theta}}= 0$.

On the other hand, since $\int_{0}^{\widehat{\theta}}Z_{r}^{x}dW_{r}$ is a martingale, we get
\begin{eqnarray*}
Y_{0}^{x}=\E\left(Y^{x}_{\widehat{\theta}}+\int_{0}^{\widehat{\theta}}f(X_{r}^{x},Y_{r}^{x},Z_{r}^{x})dr
+\int_{0}^{\widehat{\theta}}g(r,X_{r}^{x},Y_{r}^{x})dG_{r}^{x}\right).
\end{eqnarray*}
Next, $\displaystyle{Y_{\widehat{\theta}}^{x}=h(X_{\widehat{\theta_{t}}}^{x}){\bf 1}_{\left\{ \widehat{\theta}<\infty\right\}
}+\xi{\bf 1}_{\left\{ \widehat{\theta}=\infty\right\}}}$ a.s., implies $\displaystyle{Y^{x}_0=R(\widehat{\theta})}$.

Now from (\ref{C2}), we deduce that for every $\theta \in {\cal K},$%
\begin{eqnarray*}
Y_{0}^{x} &=&\E\left\{ Y_{\theta }^{x}+\int_{0}^{\theta
}f(r,X_{r}^{x},Y_{r}^{x},Z_{r}^{x})dr\right. \\
&&\left. +\int_{0}^{\theta
}g(r,X_{r}^{x},Y_{r}^{x})dG_{r}^{x}+K_{\theta }^{x}\right\} .
\end{eqnarray*}
But $K_{\theta}^{x}\geq 0$ and $\displaystyle{Y_{\theta }^{x} \geq h(X_{\theta }^{x}){\bf 1}_{\left\{ \theta <\infty\right\}
}+\xi{\bf 1}_{\left\{ \theta =\infty\right\}}}$.
Then,
\begin{eqnarray*}
R(\widehat{\theta})=Y_{0}^{x} &\geq &\E\left\{\int_{0}^{\theta
}f(r,X_{r}^{x},Y_{r}^{x},Z_{r}^{x})dr+\int_{0}^{\theta}g(r,X_{r}^{x},Y_{r}^{x})dG_{r}^{x}+h(X_{\theta}^{x}){\bf 1}_{\left\{\theta <\infty\right\}}+\xi{\bf 1}_{\left\{ \theta =\infty\right\}}\right\}\\
&\geq& R(\theta).
\end{eqnarray*}
Hence the stopping time $\widehat{\theta}$ is optimal.
\end{proof}

\subsection{An obstacle problem for elliptic PDEs with nonlinear Neumann
boundary condition}

In this subsection, we will show that in the Markovian case the solution of the reflected GBSDEs with random terminal time
is a solution of an obstacle problem for elliptic PDEs with a nonlinear Neumann
boundary condition. It follows from the results of the Section 3 that for all $x\in\overline{\Theta}$, there exists a unique triple $\left(Y^{x},Z^{x},K^{x}\right)$ be the unique
solution of the following reflected GBSDE:
\begin{enumerate}
\item
\begin{eqnarray}
Y_{s}^{x}&=&h(X^{x}_{\tau})+
\int_{s}^{\tau}f(r,X_{r}^{x},Y_{r}^{x},Z_{r}^{x})dr+%
\int_{s}^{\tau}g(r,X_{r}^{x},Y_{r}^{x})dG_{r}^{x}\nonumber\\
&&-\int_{s}^{\tau}Z_{r}^{x}dW_{r}+K_{\tau}^{x}-K_{s}^{x},\; 0\leq s\leq \tau,\label{C1}
\end{eqnarray}
\item $Y_{s}^{x}\geq h(X_{s}^{x})$,
\item $\E\left(\sup_{0\leq t\leq \tau}|Y^{x}_t|^{2}+\int_{0}^{\tau}\left|Z_{r}^{x}\right|^{2}dr\right) <+\infty$,
\item \, $K_{s}^{x}$ is an increasing process such that $K_{0}=0$ and $\int_{0}^{\tau}(Y_{s}^{x}-h(X_{s}^{x}))dK_{s}^{x}=0$.
\end{enumerate}

We now consider the related obstacle problem for elliptic PDEs with a
nonlinear Neumann boundary condition. Roughly speaking, a solution of the obstacle problem is a function $u\in C(\overline{\Theta};\R)$ which
satisfies:
\begin{align}
&&\min\left\{u\left(x\right)-h(x),Lu(x)+f(x,\,u(x),(\nabla u)^{*}\sigma(x))\right\}=0, \;\;x\in\Theta, \nonumber\\
\label{A1}\\
&&\frac{\partial u}{\partial n}(x)+g(x,u(x)) =0,\;\;x\in\partial \Theta,
\nonumber
\end{align}
where
\[
L=\frac{1}{2}\sum_{i,j=1}^{d}\left(
\sigma \sigma ^{*}\right) _{ij}\left(x\right) \frac{\partial ^{2}}{\partial x_{i}\partial x_{j}}
+\sum_{i=1}^{d}b_{i}\left(x\right) \frac{\partial}{\partial x_{i}
}
\]
and at point $x\in \partial \Theta $%
\[
\frac{\partial}{\partial n}=\sum_{i=1}^{d}\frac{\partial \psi }{
\partial x_{i}}\left( x\right) \frac{\partial}{\partial x_{i}}.
\]
More precisely, solutions of Equation $(\ref{A1})$ is take in viscosity sense.

\begin{definition}
$(a)$ $u\in {\cal C}\left(\overline{\Theta },\R^{d}\right)$ is said to be a viscosity subsolution of (\ref{A1}) if
for any point $x_0\in\overline{\Theta }$, such that $u(x_0)>h(x_0)$ and for any $\varphi\in C^{2}(\overline{\Theta})$ such that $\varphi(x_{0})=u(x_0)$ and $u-\varphi$ attains its minimum at  $x_0$, then
\begin{eqnarray}
\begin{array}{l}
-Lu(x_0)-f(x,u(x_0),(\nabla u\sigma)(x_0))\leq 0,\;\mbox{if }\;x_0\in \Theta
\\\\
\min\left(-Lu(x_0)-f(x,u(x_0),(\nabla u\sigma)(x_0)),\;-\frac{\partial \varphi}{\partial n}(x_0)-g(x_0,\;-\varphi(x_0))\right)\leq 0,\;\mbox{if }\; x\in \partial \Theta.
\end{array}
\end{eqnarray}

$(b)$ $u\in {\cal C}\left(\overline{\Theta },\R^{d}\right)$ is said to be a viscosity supersolution of (\ref{A1}) if
for any point $x_0\in\overline{\Theta }$, such that $u(x_0)\geq h(x_0)$ and for any $\varphi\in C^{2}(\overline{\Theta})$ such that $\varphi(x_{0})=u(x_0)$ and $u-\varphi$ attains its maximum at  $x_0$, then
\begin{eqnarray}
\begin{array}{l}
-Lu(x_0)-f(x,u(x_0),(\nabla u\sigma)(x_0))\geq 0,\;\mbox{if }\;x_0\in \Theta
\\\\
\min\left(-Lu(x_0)-f(x,u(x_0),(\nabla u\sigma)(x_0)),\;-\frac{\partial \varphi}{\partial n}(x_0)-g(x_0,\varphi(x_0))\right)\geq 0,\;\mbox{if }\; x\in \partial \Theta.
\end{array}
\end{eqnarray}
$(c)$ $u$ is a viscosity solution of (\ref{A1}) if it is both a viscosity subsolution and supersolution.
\end{definition}
We define
\begin{eqnarray}
u\left(x\right) =Y_{0}^{x},\;\; x\in\overline{\Theta }  \label{E1}
\end{eqnarray}
which is a deterministic quantity since $Y_{0}^{x}$ is measurable with
respect to the $\sigma$-algebra $\sigma \left( W_{r}:0\leq r\leq
\infty\right) .$ For standards estimates for reflected GBSDEs and Proposition 4.1, we deduce

\begin{proposition}
The function $u\in C(\Theta;\R)$ such that \,\,$u(x) \geq h(x) \;\; \forall \mbox{ }x\in\overline{\Theta }$
\end{proposition}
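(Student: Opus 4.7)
The plan is to establish the two assertions separately: the obstacle inequality is essentially a direct evaluation at the initial time, while the continuity requires a stability estimate for the flow $(Y^{x},Z^{x},K^{x})$ with respect to the starting point $x$.

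For the obstacle inequality, property $2$ of the reflected GBSDE (\ref{C1}) reads $Y_{s}^{x}\geq h(X_{s}^{x})$ for all $s\in[0,\tau]$, a.s. Specializing at $s=0$ and using $X_{0}^{x}=x$ gives $Y_{0}^{x}\geq h(x)$ a.s. Since $Y_{0}^{x}$ is deterministic (being $\mathcal{F}_{0}$-measurable up to the usual augmentation), passing to expectation yields $u(x)=Y_{0}^{x}\geq h(x)$ for every $x\in\overline{\Theta}$.

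For the continuity, I would fix $x,x'\in\overline{\Theta}$ and set
$\delta X_{s}=X_{s}^{x}-X_{s}^{x'}$, $\delta G_{s}=G_{s}^{x}-G_{s}^{x'}$, $\delta Y_{s}=Y_{s}^{x}-Y_{s}^{x'}$, $\delta Z_{s}=Z_{s}^{x}-Z_{s}^{x'}$, $\delta K_{s}=K_{s}^{x}-K_{s}^{x'}$, and apply It\^o's formula to $e^{\lambda s+\mu(G_{s}^{x}+G_{s}^{x'})}|\delta Y_{s}|^{2}$ between $0$ and $\tau$. Using assumptions $(iii)$--$(v)$ of $(\mathbf{A2})$ to handle the $Y$- and $Z$-variations of $f,g$ and the local-Lipschitz/continuous nature of $x\mapsto f(\cdot,x,y,z)$, $x\mapsto g(\cdot,x,y)$, the non-positivity of $\int(\delta Y)\,d(\delta K)$ coming from the Skorokhod condition, and the terminal bound $|\delta Y_{\tau}|^{2}\leq C|h(X_{\tau}^{x})-h(X_{\tau}^{x'})|^{2}$, together with the BDG inequality, I would obtain an estimate of the form
\begin{equation*}
\mathbb{E}\Bigl[\sup_{0\leq s\leq\tau}e^{\lambda s+\mu G_{s}^{x}}|\delta Y_{s}|^{2}\Bigr]
\leq C\,\mathbb{E}\Bigl[e^{\lambda\tau+\mu G_{\tau}^{x}}|h(X_{\tau}^{x})-h(X_{\tau}^{x'})|^{2}+\mathcal{E}(x,x')\Bigr],
\end{equation*}
where $\mathcal{E}(x,x')$ collects $L^{2}$-type differences coming from the coefficients evaluated along the two diffusions and from the mismatch between $dG^{x}$ and $dG^{x'}$.

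The main work is to show that the right-hand side tends to $0$ as $x'\to x$. For this I would invoke Proposition~5.1: the bounds $\mathbb{E}[\sup_{s\leq T}|X_{s}^{x}-X_{s}^{x'}|^{4}]\leq C_{T}|x-x'|^{4}$, $\mathbb{E}[\sup_{s\leq T}|G_{s}^{x}-G_{s}^{x'}|^{4}]\leq C_{T}|x-x'|^{4}$, together with the exponential moment bound $\mathbb{E}[e^{\mu G_{T}^{x}}]\leq C_{\mu,T}$ and the polynomial growth of $h$, give the needed continuity of the terminal term via Cauchy--Schwarz. The coefficient differences are handled likewise using the continuity of $(y,z)\mapsto(f,g)$ in conjunction with the $L^{p}$-convergence of $X^{x'}\to X^{x}$ and the uniform-in-$n$ bounds of the a priori estimate (\ref{est}) (applied to both $Y^{x}$ and $Y^{x'}$ to control moments of $|Y|,|Z|$ that appear after linearization). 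The most delicate piece, which is where I expect the real technical difficulty, is controlling $\int_{0}^{\tau}e^{\lambda s+\mu G_{s}}g(s,X_{s}^{x'},Y_{s}^{x'})\,d(G^{x}-G^{x'})_{s}$: I would integrate by parts, or alternatively cut the integrand with a smooth approximation in $y$ using $(\mathbf{A2})(i)$, to transfer the $x$-dependence onto the diffusion and apply Proposition~5.1. Taking $x'\to x$ then gives $Y_{0}^{x'}\to Y_{0}^{x}$, i.e. $u\in C(\overline{\Theta};\mathbb{R})$.
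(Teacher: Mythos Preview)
Your overall strategy matches the paper's, which in fact gives no detailed argument at all: it simply writes ``From standard estimates for reflected GBSDEs and Proposition~5.1, we deduce\ldots''. The obstacle inequality $u(x)\geq h(x)$ is handled correctly.

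There is, however, a genuine slip in your continuity argument. You claim that the Skorokhod condition yields $\int_{0}^{\tau}\delta Y_{s}\,d(\delta K)_{s}\leq 0$. This is only true when the two reflected GBSDEs share the \emph{same} obstacle; here the obstacles are $h(X^{x}_{\cdot})$ and $h(X^{x'}_{\cdot})$, which differ. On the support of $dK^{x}$ one has $Y^{x}_{s}=h(X^{x}_{s})$ and $Y^{x'}_{s}\geq h(X^{x'}_{s})$, hence $\delta Y_{s}\leq h(X^{x}_{s})-h(X^{x'}_{s})$; symmetrically on the support of $dK^{x'}$. The correct estimate is therefore
\[
\int_{0}^{\tau}e^{\lambda s+\mu G_{s}}\,\delta Y_{s}\,d(\delta K)_{s}
\;\leq\;
\sup_{0\leq s\leq\tau}e^{\lambda s+\mu G_{s}}\bigl|h(X^{x}_{s})-h(X^{x'}_{s})\bigr|\,\bigl(K^{x}_{\tau}+K^{x'}_{\tau}\bigr),
\]
which is not nonpositive. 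To close the argument you must then control this extra term: bound $K^{x}_{\tau}$ and $K^{x'}_{\tau}$ in $L^{2}$ via the a~priori estimate~(\ref{est}), and use Proposition~5.1 together with the continuity and polynomial growth of $h$ (plus Cauchy--Schwarz with the exponential moment bound on $G$) to show that $\sup_{s}e^{\lambda s+\mu G_{s}}|h(X^{x}_{s})-h(X^{x'}_{s})|\to 0$ in $L^{2}$ as $x'\to x$. With this correction the rest of your outline goes through and is exactly the ``standard estimate'' the paper invokes.
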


The main result in this subsection is the following.

\begin{theorem}
The function defined by $(\ref{E1})$ is a viscosity solution of $(\ref{A1})$.
\end{theorem}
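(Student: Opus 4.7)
The plan is to follow the now standard scheme connecting reflected generalized BSDEs to obstacle problems with Neumann boundary conditions, adapted to the random terminal time/elliptic setting. The starting point is the Markov (flow) property: for the solution $(Y^x, Z^x, K^x)$ of (\ref{C1}) in Section 5.3, one shows $Y_s^x = u(X_s^x)$ for all $s \in [0,\tau]$ a.s., by combining pathwise uniqueness of the reflected SDE defining $X^x$ with the uniqueness of the reflected GBSDE established in Theorem 3.1. Continuity of $u$ on $\overline{\Theta}$ follows from the stability estimate obtained along the lines of Step 1 in the proof of Theorem 3.1, together with the $L^4$ flow estimate for $X^x$ in Proposition 5.1.

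Next I would verify the viscosity subsolution property. Fix $x_0 \in \overline{\Theta}$ with $u(x_0) > h(x_0)$ and a test function $\varphi \in C^2(\overline{\Theta})$ with $\varphi(x_0) = u(x_0)$ and $u - \varphi$ attaining its minimum at $x_0$. Argue by contradiction: assume both $-L\varphi(x_0) - f(x_0,\varphi(x_0),(\nabla\varphi\,\sigma)(x_0)) > 0$ and, when $x_0 \in \partial\Theta$, $-\frac{\partial\varphi}{\partial n}(x_0) - g(x_0,\varphi(x_0)) > 0$. By continuity of $\varphi$, $X^{x_0}$ and $Y^{x_0}$, there exists a stopping time $\sigma > 0$ such that for $s \in [0,\sigma]$ we still have $Y_s^{x_0} > h(X_s^{x_0})$, so that $K^{x_0}$ does not increase on $[0,\sigma]$, and moreover both strict inequalities persist along the path. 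Applying It\^o's formula to $\varphi(X^{x_0}_s)$ and using the reflected SDE for $X^{x_0}$, then comparing with the BSDE dynamics for $Y^{x_0} = u(X^{x_0})$ (with $dK^{x_0} \equiv 0$ on $[0,\sigma]$), taking expectation, dividing by $\sigma$ and letting $\sigma \to 0$ yields the desired contradiction. Interior points $x_0 \in \Theta$ use only the $dr$ part; boundary points $x_0 \in \partial\Theta$ exploit the $\nabla\phi(X_r^{x_0})\,dG_r^{x_0}$ term, which contributes exactly the Neumann derivative $\frac{\partial\varphi}{\partial n}$ together with $g$, thanks to the support property $G^{x_0}_s = \int_0^s \mathbf{1}_{\{X^{x_0}_r \in \partial\Theta\}}\,dG^{x_0}_r$.

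The supersolution side is parallel. At $x_0$ with $u(x_0) \geq h(x_0)$ and $u-\varphi$ maximum there, use $\varphi \geq u$ locally and the reflected BSDE inequality $Y^{x_0} \geq h(X^{x_0})$. Now $K^{x_0}$ may increase, but because it is \emph{non-decreasing} and is added to the right-hand side of the BSDE, it only helps produce an inequality of the correct sign; the rest of the It\^o/expectation argument proceeds as before, treating interior and boundary points separately.

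The main obstacle is the boundary case $x_0 \in \partial\Theta$, where the Neumann condition, the obstacle, and the reflection of $X^x$ must be disentangled simultaneously. The delicate point is to show that on a short interval $[0,\sigma]$ the local time $G^{x_0}$ does not vanish on every realization so that dividing by $\E(G^{x_0}_\sigma)$ (or by $\sigma$ after exploiting the standard estimate $\E G^{x_0}_\sigma \geq c\sqrt{\sigma}$) extracts the Neumann inequality cleanly, while the interior contribution, of order $\sigma$, is dominated in the limit. Handling this correctly, together with verifying that the assumptions on $h$ (polynomial growth, continuity) and on $f,g$ from $(\mathbf{A2})$--$(\mathbf{A2'})$ are sufficient to push through the estimates up to the random terminal time $\tau$, will be the most technical part.
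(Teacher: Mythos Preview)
Your overall scheme is correct and is one of the two standard routes to this result, but it differs from the paper's in one key respect. After freezing $K^{x_0}$ on $[0,\sigma]$ and writing It\^o's formula for $\varphi(X^{x_0})$, the paper does \emph{not} take expectations, divide by $\sigma$, and send $\sigma\to 0$. Instead it observes that $(\varphi(X^{x_0}_s),(\nabla\varphi\,\sigma)(X^{x_0}_s))$ solves a GBSDE on $[0,\tau]$ with drivers $-L\varphi$ and $-\partial_n\varphi$, and then invokes the comparison theorem for generalized BSDEs from Pardoux--Zhang \cite{PZ} (terminal values $\varphi(X^{x_0}_\tau)\geq Y^{x_0}_\tau$, drivers strictly larger by $\varepsilon$) to conclude $\varphi(x_0)>u(x_0)$ directly. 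This sidesteps entirely the issue you flag as the ``main obstacle'': there is no need to control the growth of $G^{x_0}_\sigma$, to divide by $\E[G^{x_0}_\sigma]$, or to separate the $dr$ and $dG_r$ contributions asymptotically, because the comparison theorem handles both integrators at once.

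Your approach also works, but note that the limiting step you describe is unnecessary even within your own framework. In the contradiction argument you assume \emph{both} strict inequalities at a boundary point; subtracting the two It\^o/BSDE identities gives
\[
0 \;=\; \varphi(x_0)-u(x_0)\;\geq\; \varepsilon\,\E[\sigma] \;+\;\varepsilon\,\E[G^{x_0}_\sigma]\;-\;\E[K^{x_0}_\sigma],
\]
and since $K^{x_0}_\sigma=0$ and $\E[\sigma]>0$, this is already a contradiction with a \emph{fixed} $\sigma$, no division or limit required. The estimate $\E G^{x_0}_\sigma\geq c\sqrt{\sigma}$ is irrelevant here. So your worry about disentangling the local time at the boundary is a red herring; the genuine technical content is exactly what both you and the paper do identically, namely ensuring $K^{x_0}$ is flat on $[0,\sigma]$ via $u(x_0)>h(x_0)$ and continuity.
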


\begin{proof}
First, let us show that $u$ is a viscosity subsolution of $(\ref{A1})$. Let $x_0\in\overline{\Theta}$ and $\varphi\in C^{2}(\overline{\Theta};\R^d)$ be such
that $\varphi(x_0)= u(x_0)$ and $\varphi(x_0)\geq u(x)$ for all $x\in\overline{\Theta}$.

Step 1: Suppose that $u(x_0)> h(x_0)$ and $x_0\in\Theta$ and
\begin{eqnarray*}
-L\varphi(x_0)-f(x,\varphi(x_0),(\nabla \varphi\sigma)(x_0))>0,
\end{eqnarray*}
and we will find a contradiction.

Indeed, by continuity, we can suppose that there exist $\varepsilon> 0$ and $\eta_{\varepsilon}> 0$ such
that for each $x\in\{y: |y-x_0|<\eta_{\varepsilon}\subset\Theta$, we have
$u(x)\geq h(x)+\varepsilon$ and
\begin{eqnarray}
-Lu(x)-f(x,\varphi(x),(\nabla \varphi\sigma)(x))\geq \varepsilon. \label{viscosity1}
\end{eqnarray}
Define
\begin{eqnarray}
\tau = \inf\left\{s\geq 0:\;  |X^{x_0}_s-x_0|>\eta_{\varepsilon}\right\}\label{viscosity2}
\end{eqnarray}
Note that, for all $s\in[0,\infty]$
\begin{eqnarray*}
 u(X^{x_0}_s)\geq h(X^{x_0}_s)+\varepsilon.
\end{eqnarray*}
Consequently, the process $K^{x_0}_s$ is constant on $[0,\tau]$ and, hence,
\begin{eqnarray*}
Y_{s}^{x}&=&Y^{x_0}_{\tau}+\int_{s}^{\tau}f(X_{r}^{x_0},Y_{r}^{x_0},Z_{r}^{x_0})dr-\int_{s}^{\tau}Z_{r}^{x_0}dW_{r}, \; 0\leq s\leq \tau.
\end{eqnarray*}
On the other hand, applying Itô's formula to $\varphi(X^{x_0}_s)$ gives
\begin{eqnarray*}
\varphi(X^{x_0}_s)&=&\varphi(X^{x_0}_\tau)-\int_{s}^{\tau}L\varphi(X^{x_0}_r)dr-\int_{s}^{\tau}\nabla\varphi\sigma(X_{r}^{x_0})dW_{r},\; 0\leq s\leq \tau.
\end{eqnarray*}
Now, by inequality $(\ref{viscosity1})$,
\begin{eqnarray*}
-L\varphi(X^{x_0}_s)-f(X^{x_0}_s,\varphi(X^{x_0}_s),(\nabla \varphi\sigma)(X^{x_0}_s))\geq \varepsilon.
\end{eqnarray*}
Also,
\begin{eqnarray*}
\varphi(X^{x_0}_\tau)\geq u(X^{x_{0}}_\tau)=Y^{x_0}_\tau.
\end{eqnarray*}
Consequently, comparison theorem for GBSDEs (see \cite{PZ}) implies
\begin{eqnarray*}
\varphi(x_0)>\varphi(X^{x_0}_\tau)-\tau\varepsilon\geq u(x_0),
\end{eqnarray*}
which leads to a contradictions.

Step 2: If we further suppose that $u(x_0)> h(x_0)$ and $x_0\in\partial\Theta$ and
\begin{eqnarray}
\min\left(-L\varphi(x_0)-f(x,\varphi(x_0),(\nabla \varphi\sigma)(x_0)),\; -\frac{\partial\varphi}{\partial n}-g(x_0,\varphi(x_0))\right)>0.\label{viscosity3}
\end{eqnarray}
By continuity, we can suppose that there exist $\varepsilon> 0$ and $\eta_{\varepsilon}> 0$ such
that for each $x\in\{y: |y-x_0|<\eta_{\varepsilon}\subset\Theta$, we have
$u(x)\geq h(x)+\varepsilon$ and
\begin{eqnarray}
\min\left(-Lu(x)-f(x,\varphi(x),(\nabla \varphi\sigma)(x)),\, -\frac{\partial\varphi}{\partial n}-g(x,\varphi(x))\right)\geq \varepsilon. \label{viscosity4}
\end{eqnarray}
Let $\tau$ be the stopping time defined as above by $(\ref{viscosity2})$ and note that, for all $s\in[0,\tau]$
\begin{eqnarray*}
 u(X^{x_0}_s)\geq h(X^{x_0}_s)+\varepsilon.
\end{eqnarray*}
Consequently, the process $K^{x_0}_s$ is constant on $[0,\tau]$ and, hence,
\begin{eqnarray*}
Y_{s}^{x}&=&Y^{x_0}_{\tau}+\int_{s}^{\tau}f(X_{r}^{x_0},Y_{r}^{x_0},Z_{r}^{x_0})dr+\int_{s}^{\tau}g(r,X_{r}^{x_0},Y_{r}^{x_0})dG_{r}^{x_0}\nonumber\\
&&-\int_{s}^{\tau}Z_{r}^{x_0}dW_{r}, \; 0\leq s\leq \tau.
\end{eqnarray*}
On the other hand, applying Itô's formula to $\varphi(X^{x_0}_s)$ gives
\begin{eqnarray*}
\varphi(X^{x_0}_s)&=&\varphi(X^{x_0}_\tau)-\int_{s}^{\tau}L\varphi(X^{x_0}_r)dr-\int_{s}^{\tau}\frac{\partial{\varphi}}{\partial n}(X^{x_0}_r)dG^{x_0}_r-\int_{s}^{\tau}\nabla\varphi\sigma(X_{r}^{x_0})dW_{r},\; 0\leq s\leq \tau.
\end{eqnarray*}
Now, by $(\ref{viscosity4})$,
\begin{eqnarray*}
\min\left(-L\varphi(X^{x_0}_s)-f(X^{x_0}_s,\varphi(X^{x_0}_s),(\nabla \varphi\sigma)(X^{x_0}_s)),\; -\frac{\partial{\varphi}}{\partial n}(X^{x_0}_s)-g(r,X_{r}^{x_0},Y_{r}^{x_0})\right)\geq \varepsilon.
\end{eqnarray*}
Also,
\begin{eqnarray*}
\varphi(X^{x_0}_\tau)\geq u(X^{x_{0}}_\tau)=Y^{x_0}_\tau.
\end{eqnarray*}
Consequently, comparison theorem for GBSDEs (see \cite{PZ}) implies
\begin{eqnarray*}
\varphi(x_0)>\varphi(X^{x_0}_\tau)-\tau\varepsilon\geq u(x_0),
\end{eqnarray*}
which leads to a contradiction.

By the same argument as above one can show that $u$ given by $(\ref{E1})$ is also a viscosity supersolution of elliptic refected PDEs $(\ref{A1})$ and ends the proof.
\end{proof}


\begin{thebibliography}{99}
\bibitem{CIL} Crandall,  M.; Ichii, H. and Lions, P. L., User's guide to viscosity
solutions of second order partial differential equations. {\it Bull. Amer. Math. Soc. (N.S.)} {\bf 27} (1992), no. 1, 1-67.

\bibitem{CM}  Cvitanic, J. and Ma, J., Hedging option for a large investor and
Forward-Backward SDE's. {\sl Ann. Appl. Probab.} {\bf 6} (1996), no. 2, 370-398.

\bibitem{CM1}  Cvitanic J. and Ma J., Reflected forward-backward SDEs and obstacle problems with boundary conditions.
{\it J. Appl. Math. Stochastic Anal.} {\bf 14} (2001), no. 2, 113-138.

\bibitem{DM}  Dellacherie, C. and Meyer, P. A. Probabilités et potentiel. (French) Chapitres I à IV. Édition entièrement refondue. Publications de l'Institut de Mathématique de l'Université de Strasbourg, No. XV. Actualités Scientifiques et Industrielles, {\it No. 1372. Hermann, Paris}, 1975. x+291 pp.

\bibitem{Karoui} El Karoui N., Les aspects probabilistes du contrôle stochastique. (French) [The probabilistic aspects of stochastic control] {\it Ninth Saint Flour Probability Summer School--1979 (Saint Flour, 1979)}, pp. 73-238, Lecture Notes in Math., 876, {\it Springer, Berlin-New York}, 1981.

\bibitem{EPQ}  El Karoui, N., Peng, S. and Quenez, M. C., Backward stochastic
differential equation in finance. {\it Math. Finance}. {\bf 7} (1997), no. 1, 1-71.


\bibitem{EKPP}  El Karoui, N.; Kapoudjian, C.; Pardoux E.; Peng, S. and Quenz, M. C.,
Reflected solution of backward SDE's and related obstacle problem for
PDE's, {\it Ann. Probab.} {\bf 25} (1997), no. 2, 702-737.

\bibitem{HL}  Hamad\`{e}ne, S. and Lepeltier, J. P., Zero-sum stochastic games and
BSDEs, {\it  Systems Control Lett. } {\bf 24} (1995), no. 4, 259-263.

\bibitem{HL1}  Hamad\`{e}ne, S. and Lepeltier, J. P., Reflected BSDes and mixed
game problem, {\it Stochastic Process. Appl}
{\bf 85} (2000), no. 2, 177-188.

\bibitem{HLA}  Hamad\`{e}ne, S.; Lepeltier J. P. and Matoussi, A., Double barrier backward SDEs with continuous coefficient.
{\it Backward stochastic differential equations (Paris, 1995-1996)}, 161-175, Pitman Res. Notes Math. Ser., 364, {\it Longman, Harlow}, 1997.

\bibitem{HO}  Hamad\`{e}ne S., Ouknine Y., Reflected Backward Stochastic
Differential Equations with jumps and random obstacle. {\it Electron. J. Probab.} {\bf 8} (2003), no. 2, 20 pp.

\bibitem{LS}Lions, P. L. and Sznitman, A. S., Stochastic differential
equations with reflecting boundary conditions. {\it Comm. Pure and Appl. Math.} {\bf 37} (1984), no. 4, 511-537.

\bibitem{P1}  Pardoux, E, Backward stochastic differential equations and viscosity solutions of systems of semilinear parabolic and elliptic PDEs of second order
{\it Stochastic analysis and related topics, VI (Geilo, 1996)}, 79-127, Progr. Probab., 42, {\it Birkhäuser Boston, Boston, MA}, 1998.

\bibitem{P2}  Pardoux, E., BSDEs, weak convergence and homogenization of semilinear PDEs. {\it Nonlinear analysis, differential equations and control (Montreal, QC, 1998)}, 503-549, NATO Sci. Ser. C Math. Phys. Sci., 528, {\it Kluwer Acad. Publ., Dordrecht}, 1999.

\bibitem{PP}  Pardoux, E. and Peng S., Backward stochastic differential equations and quasilinear parabolic partial differential equations. {\it Stochastic partial differential equations and their applications (Charlotte, NC, 1991)}, 200-217, Lecture Notes in Control and Inform. Sci., 176, {\it Springer, Berlin}, 1992.

\bibitem{PZ}  Pardoux, E., Zhang S., generalized BSDEs and nonlinear Neumann
boundary value problems. {\it Proba. Theory and Related Fields} {\bf 110} (1998), no. 4, 535-558.

\bibitem{Ral} Ren, Y. and Xia, N. Generalized reflected BSDE and obstacle problem for PDE with nonlinear Neumann boundary condition. {\it Stoch. Anal. Appl.}
{\bf 24} (2006), no. 5, 1013-1033.
\bibitem{RH} Ren, Y. and Hu, L., Reflected backward stochastic differential equations driven by Lévy processes. {\it Statist. Probab. Lett.} 77 (2007), no. 15, 1559-1566.
\bibitem{S}Saisho, Y., Stochastic differential equation for multidimensional
domains with reflecting boundary. {\it Probab. Theory Related Fields} {\bf 74} (1987), no. 3, 455-477.
\end{thebibliography}
\end{document}